\newcommand{\m}[1]{\textbf{\uppercase{#1}} }
\newcommand{\vr}[1]{\mathcal{\uppercase{#1}} }
\newcommand{\usub}{\subseteq}
\theoremstyle{plain} \newtheorem{thm}{Theorem}[section]
\newtheorem*{thm*}{Theorem}
\newtheorem{lm}[thm]{Lemma}
\newtheorem{observation}[thm]{Observation}
\newtheorem{cor}[thm]{Corollary}
\newtheorem*{cor*}{Corollary}
\theoremstyle{remark} 
\theoremstyle{definition} \newtheorem{df}[thm]{Definition}
\newtheorem{prob}[thm]{Problem}
\numberwithin{equation}{section}      
\newcommand{\pmat}[1]{ \ensuremath{\begin{pmatrix} #1 \end{pmatrix}} }
\DeclareMathOperator{\Jchain}{\text{J}}
\DeclareMathOperator{\DJchain}{\text{DJ}}
\DeclareMathOperator{\Gchain}{\text{G}}
\DeclareMathOperator{\DGchain}{\text{DG}}
\DeclareMathOperator{\Pchain}{\text{P}}
\DeclareMathOperator{\Erel}{\mathit{E}}
\DeclareMathOperator{\Frel}{\mathit{F}}
\begin{document}
\title{Absorption and directed J\'{o}nsson terms}
\author{A. Kazda,
  M. Kozik\footnote{The second author was supported by National Science
    Center grant DEC-2011/01/B/ST6/01006.},
  R. McKenzie,
  M. Moore }
\date{\today}
\maketitle


\begin{abstract}
We prove that every congruence distributive variety has directed
J\'{o}n\-sson terms, and every congruence modular variety has directed Gumm
terms. The directed terms we construct witness every case of absorption
witnessed by the original J\'{o}nsson or Gumm terms. This result is
equivalent to a pair of claims about absorption for admissible preorders in
CD and CM varieties, respectively. For finite algebras, these absorption
theorems have already seen significant applications, but until now, it was
not clear if the theorems hold for general algebras as well. Our method also
yields a novel proof of a result by P. Lipparini about the existence a chain
of terms (which we call Pixley terms) in varieties that are at the same time
congruence distributive and $k$-permutable for some $k$.
\end{abstract}

\section{Introduction}  
In 1967, Bjarni J\'{o}nsson~\cite{jonsson} proved that a variety $\vr v$ is
congruence distributive~(CD) if and only if it has, for some $n$, a sequence
of terms $J_0(x,y,z)$, $\dots$, $J_n(x,y,z)$ satisfying a certain system of
equations, namely, $J_0(x,y,z)=x$, $J_n(x,y,z)=z$, $J_i(x,y,x)=x$ for each
$0\leq i\leq n$, and for each $0\leq i<n$, either the equation
$J_i(x,x,y)=J_{i+1}(x,x,y)$ or the equation $J_i(x,y,y)=J_{i+1}(x,y,y)$.
This {Maltsev} condition can be formulated more specifically in several
equivalent ways. The following formulation is convenient for our purposes:
for some $n\geq 0$ and terms $J_0(x,y,z), \dots, J_{2n+1}(x,y,z)$, consider
the system of equations
\begin{equation} \tag{$\Jchain(n)$} \begin{aligned}
  & J_1(x,x,y) = x, \qquad J_{2n+1}(x,y,y) = y, \\
  & \begin{aligned}
    & J_i(x,y,x) = x,                   & \quad & \text{for } 0\leq i\leq 2n+1, \\
    & J_{2i+1}(x,y,y) = J_{2i+2}(x,y,y) & \quad & \text{for } 0\leq i\leq n-1, \\
    & J_{2i}(x,x,y) = J_{2i+1}(x,x,y)   & \quad & \text{for } 1\leq i\leq n,
  \end{aligned}
\end{aligned} \end{equation}
and call this package of equations $\Jchain(n)$. By a chain of
\emph{J\'{o}nsson terms} for a variety $\vr v$, we mean a sequence of terms
satisfying over $\vr v$ the equations $\Jchain(n)$ for some $n$. J\'{o}nsson
proved that an algebra $\m a$ has terms obeying the equations $\Jchain(n)$,
for some $n$, if and only if the congruence lattice of every algebra in the
variety generated by $\m a$ is distributive. A system of \emph{directed
J\'{o}nsson terms} for $\vr v$ consists, for some $n\geq 1$, of terms
$D_1(x,y,z), \dots, D_n(x,y,z)$ satisfying over $\vr v$ the equations
$\DJchain(n)$:
\begin{equation} \tag{$\DJchain(n)$} \begin{aligned}
  & D_1(x,x,y) = x, \qquad D_n(x,y,y) = y, \\
  & \begin{aligned}
    & D_i(x,y,x) = x              & \quad & \text{for } 1\leq i\leq n, \\
    & D_i(x,y,y) = D_{i+1}(x,x,y) & \quad & \text{for } 1\leq i<n.
  \end{aligned}
\end{aligned} \end{equation}
Our chief purpose is to show that a variety has J\'{o}nsson terms if and
only if it has directed J\'{o}nsson terms. Moreover, in such a case, one can
find a sequence of terms which satisfy $\Jchain(n)$ and $\DJchain(2n+1)$ for
some $n$ at the same time. These two results are contained in
Corollary~\ref{cor:J} and Observation~\ref{obs:bestofboth}.

H.P. Gumm~\cite{gumm} proved that a variety $\vr v$ is congruence modular
(CM) if and only if it has, for some $n\geq 0$, a sequence of terms
$J_1(x,y,z), \dots, J_{2n+1}(x,y,z)$, and $P(x,y,z)$ satisfying the
equations $\Gchain(n)$:
\begin{equation} \tag{$\Gchain(n)$} \begin{aligned}
  & J_1(x,x,y) = x, \qquad J_{2n+1}(x,y,y) = P(x,y,y), \qquad P(x,x,y) = y, \\
  & \begin{aligned}
    & J_i(x,y,x)=x                      & \quad & \text{for } 0\leq i\leq 2n+1, \\
    & J_{2i+1}(x,y,y) = J_{2i+2}(x,y,y) & \quad & \text{for } 0\leq i\leq n-1, \\
    & J_{2i}(x,x,y) = J_{2i+1}(x,x,y)   & \quad & \text{for } 1\leq i\leq n.
  \end{aligned}
\end{aligned} \end{equation}
\emph{Directed Gumm terms} are terms $D_1(x,y,z), \dots, D_n(x,y,z)$, and
$Q(x,y,z)$ satisfying $\DGchain(n)$ for some $n\geq 1$:
\begin{equation} \tag{$\DGchain(n)$} \begin{aligned}
  & D_1(x,x,y) = x, \qquad D_n(x,y,y) = Q(x,y,y), \qquad Q(x,x,y) = y, \\
  & \begin{aligned}
    & D_i(x,y,x) = x              & \quad & \text{for } 1\leq i\leq n, \\
    & D_i(x,y,y) = D_{i+1}(x,x,y) & \quad & \text{for } 1\leq i<n.
  \end{aligned}
\end{aligned} \end{equation}

Similarly to the congruence distributive case, we show that a variety has
Gumm terms if and only if it has directed Gumm terms, and that given Gumm
terms we can find terms satisfying $\Gchain(n)$ and $\DGchain(2n+1)$ for
some $n$ at the same time.  These two results are contained in
Theorem~\ref{thm1} and Observation~\ref{obs:bestofboth}.

Our context makes it natural to introduce another Maltsev condition that
looks similar to directed J\'{o}nsson terms but is actually much stronger.
The condition is that for some $n\geq 1$ there are terms $P_1(x,y,z), \dots,
P_n(x,y,z)$ satisfying $\Pchain(n)$:
\begin{equation} \tag{$\Pchain(n)$} \begin{aligned}
  & P_1(x,y,y) = x, \qquad P_n(x,x,y) = y, \\
  & \begin{aligned}
    & P_i(x,y,x) = x              & \quad & \text{for } 1\leq i\leq n, \\
    & P_i(x,x,y) = P_{i+1}(x,y,y) & \quad & \text{for } 1\leq i<n.
  \end{aligned}
\end{aligned} \end{equation}
This condition, which we call \emph{Pixley terms}, first appeared in P.
Lipparini~\cite{lipparini}.

Observe that if we remove the equations ``$J_i(x,y,x)=x$'' from
$\DJchain(n)$, we obtain a Maltsev condition that is always trivially
satisfied by taking $D_1(x,y,z)=y$ and $D_i(x,y,z)=z$ for all $1<i\leq n$.
For contrast, removing the equations $P_i(x,y,x)=x$ from $\Pchain(n)$
produces the classical Hagemann-Mitschke terms~\cite{hm}, and these have
highly nontrivial consequences. A variety has a chain of $n$
Hagemann-Mitschke terms if and only if it has $(n+1)$-permuting congruences.
The variety of lattices, for example, satisfies $\Jchain(1)$ but does not
have Hagemann-Mitschke terms.

A. Pixley~\cite{pixley} proved that a variety is congruence distributive and
all its congruences permute if and only if it satisfies $\Pchain(1)$. A term
$P_1(x,y,z)$ for which 
\[
  P_1(x,y,x) = P_1(x,y,y) = P_1(y,y,x) = x
\]
holds has long been called a \emph{Pixley term}. In this connection, note
that the term $J_1(x,y,z)$ with the equations $J_1(x,y,x) = J_1(x,x,y) =
J_1(y,x,x) = x$ constituting J\'{o}nsson terms $\Jchain(0)$ is familiarly
known as a \emph{majority} term; and both $\Jchain(0)$ and $\DJchain(1)$ are
just asserting that we have a majority term.

\bigskip
Here is our principal result about these Maltsev conditions.
\begin{thm}\label{thm:main1}
Let $\vr v$ be any variety of algebras.
\begin{enumerate}
  \item\label{CDpart} $\vr v$ is congruence distributive if and only if it
    has directed J\'{o}nsson terms. In such a case there is a sequence of
    terms satisfying $\DJchain(2n+1)$ and $\Jchain(n)$ at the same time~(for
    some $n\geq 1$). See Corollary~\ref{cor:J} and
    Observation~\ref{obs:bestofboth}.
\item\label{nCPpart} For any integer $k\geq 1$, a variety $\vr v$ is
  congruence distributive and has $(k+1)$-permuting congruences if and only
  if it satisfies $\Pchain(k)$. See Theorem~\ref{thm2} for the
  ``$\Rightarrow$'' implication.
\item\label{CMpart} $\vr v$ is congruence modular if and only if it has
  directed Gumm terms. In such a case there is a sequence of terms
  satisfying $\Gchain(n)$ and $\DGchain(2n+1)$ at the same time~(for some
  $n\geq 1$). See Theorem~\ref{thm1} and Observation~\ref{obs:bestofboth}.
\end{enumerate}
\end{thm}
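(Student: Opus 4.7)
The strategy is to handle each of (1), (2), (3) as a pair consisting of a routine backward implication and a substantive forward implication; the forward implications are deferred to the cited intermediate results.

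For the backward implications the plan is a standard semantic argument. Given directed J\'onsson terms $D_1,\dots,D_n$ and congruences $\alpha,\beta,\gamma$ of some algebra in $\vr v$ with $(a,b)\in\alpha\wedge(\beta\vee\gamma)$, I would pick a $(\beta\vee\gamma)$-witness $a=c_0,c_1,\dots,c_k=b$ and evaluate $D_i(a,c_j,b)$ for each $i,j$: the equations $D_i(x,y,x)=x$ together with $a\,\alpha\,b$ pin every $D_i(a,c_j,b)$ in the $\alpha$-class of $a$, while $D_i(x,y,y)=D_{i+1}(x,x,y)$ links successive rows; reading off the resulting chain witnesses $(a,b)\in(\alpha\wedge\beta)\vee(\alpha\wedge\gamma)$, giving CD. The Gumm case is the same argument with $Q$ providing a bridge at the end, yielding CM. For $\Pchain(k)$ the clauses $P_i(x,y,x)=x$ again give CD, and the equations $P_i(x,x,y)=P_{i+1}(x,y,y)$ are (up to reindexing) the Hagemann-Mitschke chain witnessing $(k+1)$-permutability.

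The substantive content is the forward direction, which will be carried out in Corollary~\ref{cor:J} (CD), Theorem~\ref{thm1} (CM), and Theorem~\ref{thm2} (Pixley). The plan is to pass to the two-generated free algebra $F = F_{\vr v}(x,y)$ and introduce the relation $R\usub F\times F$ consisting of all pairs $\bigl(t(x,x,y),\, t(x,y,y)\bigr)$, as $t$ ranges over ternary terms with $t(x,y,x)=x$. $R$ is a subalgebra of $F\times F$, and $\DJchain(n)$ is equivalent to the existence of a directed $R$-path from $x$ to $y$ in $F$. The classical J\'onsson terms already provide a zig-zag connection of $x$ to $y$ in a related bidirectional relation on $F\times F$, so the task reduces to straightening this zig-zag into a monotone $R$-path. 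I would accomplish this by exploiting the distributivity of the congruence lattice of $F\times F$ to decompose and recombine congruences so that the absorption condition $t(x,y,x)=x$ survives the straightening. The CM case is the same, with an extra term $Q$ closing the path at $y$ in the role of $P$ in $\Gchain(n)$; the Pixley case combines this with a Hagemann-Mitschke chain witnessing $(k+1)$-permutability.

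Observation~\ref{obs:bestofboth} should then fall out of the construction: the directed chain $D_1,\dots,D_{2n+1}$ built from $\Jchain(n)$ can, together with $J_0=x$, be re-read as a J\'onsson chain, provided the intermediate terms inherit the middle-variable-invariance that the construction can be arranged to deliver. The main obstacle throughout is the forward direction, and specifically the preservation of the absorption condition $t(x,y,x)=x$ while combining classical J\'onsson or Gumm terms inside $F\times F$; this is precisely where distributivity (resp.\ modularity) is used, and where the bulk of the paper's technical work will reside.
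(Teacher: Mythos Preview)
Your handling of the backward implications is fine; the paper actually does it more cheaply by constructing ordinary J\'onsson (resp.\ Gumm) terms from the directed ones (Observation~\ref{obs:bestofboth}) and then invoking J\'onsson's and Gumm's classical theorems, but your direct semantic argument also works.

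The real gap is in your forward-direction plan. You propose to ``exploit the distributivity of the congruence lattice of $F\times F$'' to straighten the J\'onsson zig-zag into a directed $R$-path. This is not the paper's mechanism, and it is unclear how it could be made to work: congruence distributivity tells you about joins and meets of congruences, but it does not by itself hand you syntactic terms $t$ with $t(x,y,x)=x$ lying on a directed path, and there is a danger of circularity in using CD as a black box to produce the very terms that characterize CD. The paper's argument is quite different. It does \emph{not} use distributivity at all in the forward direction; instead it works in a deliberately weak variety $\mathcal W$ generated by operations satisfying only the outer J\'onsson identities (dropping $J_i(x,y,x)=x$), and replaces the condition $t(x,y,x)=x$ by the more robust notion of \emph{middle absorption}: one takes $\mathcal G$ to be the set of terms $t$ such that $\mathbf B\lhd^m_{\mathcal J}\mathbf A$ implies $\mathbf B\lhd^m_t\mathbf A$, sets $E=\{(t(xxz),t(xzz)):t\in\mathcal G\}$ and $F$ the analogous relation with $t$ unrestricted, and observes $E\lhd^m_{\mathcal J}F$. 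The technical core (Theorem~\ref{thm:main4}) is a purely combinatorial argument in the free algebra using ``fences'' and ``boxes'': an $n$-fence is a zig-zag of $\rightarrow$-arrows, an induction (Lemma~\ref{lm:5}) shortens the fence by one at each step at the cost of squaring a certain element $J$, and a $1$-fence is converted to a single $\rightarrow$-arrow via a box construction (Lemmas~\ref{lm:3} and~\ref{lm:2}). The absorption $E\lhd^m_{\mathcal J}F$ is what makes each of these steps go through, and the passage to the weak variety $\mathcal W$ is what lets the same construction handle J\'onsson, Gumm, and Pixley cases uniformly.

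Your description of Observation~\ref{obs:bestofboth} also has the logic slightly inverted: the paper does not build a directed chain of length $2n+1$ from $\Jchain(n)$ and then re-read it. Rather, starting from any $\DJchain(n)$ chain $D_1,\dots,D_n$, it \emph{interleaves} the terms $D_{i+1}(x,x,z)$ at even positions to produce a single sequence satisfying $\Jchain(n-1)$ and $\DJchain(2n-1)$ simultaneously; no ``middle-variable-invariance'' hypothesis is needed beyond what $\DJchain(n)$ already provides.
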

Statement (2) is Proposition 5 in P. Lipparini~\cite{lipparini}. However,
our proof, given in Section~\ref{section6}, is new, and shows more.

\begin{observation}\label{obs:bestofboth}  
Let $\vr v$ be a variety that admits a chain of terms satisfying
$\DJchain(n)$. Then $\vr v$ admits a chain of terms that satisfy
$\Jchain(n-1)$ and $\DJchain(2n-1)$ at the same time. Similarly,
$\DGchain(n)$ implies the existence of a chain of terms that simultaneously
satisfies $\Gchain(n-1)$ and $\DGchain(2n-1)$.
\end{observation}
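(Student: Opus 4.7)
The plan is to construct the common chain explicitly by inserting a ``plateau'' term between each pair of consecutive directed terms. Suppose $D_1,\dots,D_n$ satisfy $\DJchain(n)$. I define $2n-1$ ternary terms $\tilde D_1,\dots,\tilde D_{2n-1}$ by setting
\[
  \tilde D_{2i-1}(x,y,z) = D_i(x,y,z) \quad (1\le i\le n), \qquad
  \tilde D_{2i}(x,y,z) = D_i(x,z,z) \quad (1\le i\le n-1),
\]
and take $J_0(x,y,z)=x$ together with $J_i = \tilde D_i$ for $1\le i\le 2n-1$. The underlying idea is that the directed chain already threads through the common values $D_i(x,y,y)=D_{i+1}(x,x,y)$; the inserted term $\tilde D_{2i}$ is chosen so that its $(x,x,y)$ and $(x,y,y)$ evaluations both collapse to this common value, while $(x,y,x)$ collapses to $x$ via the diagonal specialisation $D_i(x,x,x)=x$ of the identity $D_i(x,y,x)=x$.

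Verification of $\DJchain(2n-1)$ proceeds in three parts. The identities $\tilde D_i(x,y,x)=x$ and the endpoint identities $\tilde D_1(x,x,y)=x$ and $\tilde D_{2n-1}(x,y,y)=y$ are inherited directly from $D_1,\dots,D_n$. Each chain link $\tilde D_i(x,y,y) = \tilde D_{i+1}(x,x,y)$ splits by the parity of $i$: the odd-to-even links reduce to the tautology $D_k(x,y,y)=D_k(x,y,y)$, while the even-to-odd links reduce to the original chain link $D_k(x,y,y)=D_{k+1}(x,x,y)$. For $\Jchain(n-1)$, the alternating agreements $J_{2i+1}(x,y,y)=J_{2i+2}(x,y,y)$ and $J_{2i}(x,x,y)=J_{2i+1}(x,x,y)$ all collapse to identities of the form $D_k(x,y,y)=D_k(x,y,y)$ by the very choice of the plateau terms, and the remaining identities coincide with the $\DJchain$ axioms just checked.

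For the Gumm case, I apply the same construction to the directed Gumm terms $D_1,\dots,D_n$ and keep $Q$ unchanged as $\tilde Q$. The analysis of the $\tilde D$-axioms carries over verbatim, while $\tilde Q(x,x,y)=y$ and $\tilde D_{2n-1}(x,y,y)=\tilde Q(x,y,y)$ follow immediately from $Q(x,x,y)=y$ and $D_n(x,y,y)=Q(x,y,y)$. Since the whole argument is an explicit construction followed by a routine axiom-by-axiom check, there is no substantive obstacle; the only point that requires thought is the choice of the plateau $\tilde D_{2i}(x,y,z)=D_i(x,z,z)$, which is essentially forced by the three identities it must satisfy.
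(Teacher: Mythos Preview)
Your construction is correct and is essentially identical to the paper's: the paper defines the even-indexed terms as $J_{2i}(x,y,z)=D_{i+1}(x,x,z)$, while you use $\tilde D_{2i}(x,y,z)=D_i(x,z,z)$, and these coincide by the link $D_i(x,y,y)=D_{i+1}(x,x,y)$. One small slip in your write-up: the identities $J_{2i}(x,x,y)=J_{2i+1}(x,x,y)$ do not ``collapse to $D_k(x,y,y)=D_k(x,y,y)$'' but rather to the original directed link $D_i(x,y,y)=D_{i+1}(x,x,y)$; the construction is unaffected, only the sentence needs adjusting.
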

\begin{proof}
Given directed J\'{o}nsson terms $D_1,\dots,D_n$, we produce the new terms
by letting
\begin{gather*}
  J_1(x,y,z) = D_1(x,y,z), \qquad J_{2i}(x,y,z) = D_{i+1}(x,x,z), \\
  J_{2i+1}(x,y,z) = D_{i+1}(x,y,z) \qquad \text{for } 1\leq i\leq n-1.
\end{gather*}
We leave to the reader the easy proof that $\DGchain(n)$ implies appropriate
terms for congruence modular varieties.
\end{proof}  

Similarly, $\Pchain(n)$ implies $\Jchain(n)$: given some Pixley terms
$P_1,\dots,P_n$, take 
\begin{align*}
  & J_1(x,y,z) = x, \qquad J_{2n+1}(x,y,z) = z, \\
  & \begin{aligned}
    & J_{2i}(x,y,z) = P_i(x,y,z)       & \quad & \text{for } 0\leq i<n, \\
    & J_{2i+1}(x,y,z) = P_{i+1}(x,z,z) & \quad & \text{for } 0\leq i<n.
  \end{aligned}
\end{align*}
It is an easy exercise to show that $\Pchain(k)$ implies $(k+1)$-permuting
congruences.

Our proof of the converse implications, that is, $\Jchain(n)$ implies
$\DJchain(k)$ for some $k$, and $\Gchain(n)$ implies $\DGchain(k)$ for some
$k$, will take some work and will be concluded in Sections~\ref{section4}
and \ref{section5}. The fact that a $(k+1)$-permutable variety with
J\'{o}nsson terms satisfies $\Pchain(k)$ is demonstrated in
Section~\ref{section6}.

\section{Absorption}\label{section2}   
The notion of \emph{absorption} was introduced by L. Barto and M. Kozik
\cite{BK12}, who proved deep results about absorption in finite algebras and
used this theory as a powerful tool for applying universal algebraic methods
in the study of constraint satisfaction problems (this area where universal
algebra and theoretical computer science meet has blossomed over the past
decade).

If $\m C$ and $\m D$ are subalgebras of an algebra $\m a$ we say that $\m C$
\emph{absorbs} $\m D$ if $\emptyset \neq C\usub D$ and there is a term
operation $s(x_1,\dots,x_n)$ of the algebra $\m a$ such that $\m a\models
s(x,\dots,x)=x$ (i.e. $s$ is idempotent) and whenever $\bar{d}\in D^n$ with
$d_i\in D\setminus C$ for at most one $i\in \{1,\dots,n\}$, then
$s(\bar{d})\in C$. We denote the fact that $\m C$ absorbs $\m D$ in this
sense by $\m C\lhd \m D$, or $\m C\lhd_s \m D$ where $s$ is the term
operation that witnesses the absorption.

In this paper, however, a different variant of absorption is needed. We will
say that a sequence $J_1,\dots,J_{2n+1}$ of terms is a chain of \emph{weak
J\'{o}nsson terms} if $J_1,\dots,J_{2n+1}$ satisfy all of the equations
$\Jchain(n)$ except perhaps $J_i(x,y,x)=x$.  We define \emph{weak directed
J\'{o}nsson chains}, \emph{weak Gumm chains}, and \emph{weak directed Gumm
chains} similarly, always dropping the requirement that $J_i(x,y,x)=x$.

If $\m C$ and $\m D$ are subalgebras of $\m a$, $\emptyset\neq C\usub D$,
and $t(x,y,z)$ is a ternary idempotent term operation of $\m a$, then we
write $\m C\lhd^m_t \m D$ and say that $\m C$ \emph{middle absorbs $\m D$
with respect to $t$} if $t(a,b,c)\in C$ whenever $a,c\in C$ and $b\in D$.
If $\vr t$ is a set of ternary idempotent term operations of $\m a$, we say
that $\m C$ middle absorbs $\m D$ with respect to $\vr t$, written $\m
C\lhd^m_{\vr t} \m D$, provided that $\m C\lhd^m_t \m D$ for every $t\in \vr
t$.

We are interested in four special cases of middle absorption: J\'{o}nsson
absorption, Gumm absorption, and directed versions thereof. We save Gumm
absorption for the end of this paper and concentrate on J\'{o}nsson
absorption for now.

We say that $\m C$ \emph{J\'{o}nsson absorbs} $\m D$ if $\m C\lhd^m_{\vr j}
\m D$, where $\vr j$ is a sequence of weak J\'onsson terms. Directed
J\'{o}nsson absorption is defined analogously, with weak directed J\'onsson
terms. We shall write $\m C\lhd_J \m D$ (in words, $C$ J\'{o}nsson absorbs
$D$) to indicate either that $\m C \lhd^m_{\vr j}\m D $ for some chain $\vr
j$ of weak J\'onsson terms, or that $\m C\lhd^m_{\vr j}\m D$ for a specific
system of terms that is being held fixed. The context will make clear which
is meant. Our use of the notation $\m C\lhd_{DJ} \m D$ (directed J\'{o}nsson
absorption) is analogous.

It is an easy exercise to show that if $\m a$ is a finite idempotent algebra
then $\m a$ admits a chain of J\'onsson terms (respectively, directed
J\'onsson terms) if and only if for every $a\in A$ we have $\{a\}\lhd_J \m
a$ (respectively, $\{a\}\lhd_{DJ} \m a$). Moreover, it is immediate that
\nocite{barto-kazda} standard absorption, $\m C\lhd \m D$, implies $\m
C\lhd_{DJ} \m D$, which in turn implies $\m C\lhd_{J} \m D$. Indeed, suppose
that $\m C\lhd_t \m D$ for $t=t(x_1,\dots, x_n)$. Take
$Q_1(x,y,z)=t(x,\dots,x,y)$,
\[
  Q_j(x,y,z)=t(x,\dots,x,y,z,\dots,z)
  \qquad \text{with $y$ in the $(n-j+1)$-th place},
\]
for $1<j<n$, and $Q_n(x,y,z)=t(y,z,\dots,z)$. This is a system of directed
J\'{o}nsson operations with respect to which $\m C$ middle absorbs $\m D$.
The proof that $\m C \lhd_{DJ} \m D$ implies $\m C \lhd_{J} \m D$ is similar
to the argument that if $\vr v$ is a variety with a chain of terms that
satisfy $\DJchain(n)$, then $\vr v$ has a chain of terms satisfying
$\Jchain(n-1)$.

The second principal result of our paper is included in
Theorem~\ref{thm:main3}. Before introducing it we present a proof of the
same result for finite algebras. The result was motivated by
Barto~\cite{barto} and the proof essentially follows the argument presented
there.

\begin{thm}\label{thm:preorders}   
Suppose that $E$ and $F$ are admissible preorders on $\m a$ (that is, they
are subalgebras of $\m a^2$ that are reflexive and transitive over $A$). If
$E\lhd_{J} F$, then $E=F$.
\end{thm}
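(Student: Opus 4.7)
The plan is as follows. Fix $(a, b) \in F$; the goal is to show $(a, b) \in E$. Let $J_1, \dots, J_{2n+1}$ be a chain of weak J\'onsson terms witnessing $E \lhd_{J} F$. Since $(a, a), (b, b) \in E$ by reflexivity of $E$ and $(a, b) \in F$ by hypothesis, middle absorption applied to the triple $\bigl((a, a),\, (a, b),\, (b, b)\bigr)$ in $E \times F \times E$ gives
\[
  \bigl(J_i(a, a, b),\, J_i(a, b, b)\bigr) \in E \qquad (1 \le i \le 2n+1).
\]
Writing $\alpha_i = J_i(a, a, b)$ and $\beta_i = J_i(a, b, b)$, the weak J\'onsson identities $\alpha_1 = a$, $\beta_{2n+1} = b$, $\beta_{2i+1} = \beta_{2i+2}$, and $\alpha_{2i} = \alpha_{2i+1}$ collapse these $2n+1$ pairs into a zigzag of elements $a = p_0, q_0, p_1, q_1, \dots, p_n, q_n = b$ satisfying $(p_i, q_i), (p_{i+1}, q_i) \in E$ for all $i$ in range.

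The main obstacle will be straightening this zigzag. Transitivity of $E$ alone is insufficient: both $p_i$ and $p_{i+1}$ lie below $q_i$ in $E$, so the two $E$-arrows entering $q_i$ cannot be composed into a single $E$-chain from $a$ to $b$. In the directed J\'onsson case the equation $D_i(x, y, y) = D_{i+1}(x, x, y)$ forces $q_i = p_{i+1}$, converting the zigzag into a telescoping chain that yields $a \leq_E b$ in one application of transitivity; it is precisely this directedness that is missing here, and supplying it is the entire content of the theorem.

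For finite $\m a$ I would complete the proof by a minimal-counterexample argument in the spirit of Barto~\cite{barto}. Assuming toward a contradiction that $E \neq F$, I would choose $(a, b) \in F \setminus E$ minimizing a well-founded measure (for instance the cardinality of the $F$-interval $\{c : a \leq_F c \leq_F b\}$, or a suitable lexicographic datum extracted from the zigzag). Feeding the freshly derived $E$-pairs $(p_i, q_i), (p_{i+1}, q_i)$ back into new middle-absorption instances---using $(a, b)$, or translates of it, in the middle slot between two already-derived $E$-pairs in the outer slots---would produce a pair in $F \setminus E$ of strictly smaller measure, contradicting minimality. Finiteness of $\m a$ enters only to guarantee well-foundedness of the measure; this is precisely why the argument does not generalize directly, and is the motivation for the more substantial construction of directed J\'onsson terms from weak J\'onsson witnesses carried out in Sections~\ref{section4}--\ref{section5} to handle the infinite case.
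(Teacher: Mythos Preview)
Your setup of the zigzag $a=p_0\to q_0\leftarrow p_1\to q_1\leftarrow\cdots\leftarrow p_n\to q_n=b$ is correct, and you have correctly identified both the obstacle and the fact that the infinite case is handled by constructing directed terms (Section~\ref{section4}). The gap is in your finite-case sketch: it is not a proof but a hope, and the specific measures you suggest do not obviously work. Minimizing the cardinality of the $F$-interval $[a,b]_F$ gives you nothing to point to as a strictly smaller counterexample: the pairs $(p_i,q_i)$ and $(p_{i+1},q_i)$ you derive are already in $E$, not in $F\setminus E$, and there is no evident $F$-relation between $q_i$ and $p_{i+1}$ (or any other pair in the zigzag) that would produce a new element of $F\setminus E$ inside a strictly smaller interval. ``Feeding the $E$-pairs back into new middle-absorption instances'' is exactly the move one wants, but you have not said which instance, and without that there is no argument.

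The paper's finite proof uses a different and sharper minimality. One first replaces $\m a$ by the subalgebra generated by $\{a,b\}$, so that $b$ is $F$-maximal; then, using finiteness, one replaces $a$ by an element that is $E$-\emph{maximal}. The induction is then direct rather than by contradiction: one shows $a\to J_{2i+1}(a,b,b)$ for all $i$. The inductive step hinges on two observations you are missing. First, $E$-maximality of $a$ means that once $a\to q$ one automatically has $q\to a$, so $a$ and $q$ are $E$-equivalent. Second, writing $p=J_{2i+2}(a,a,b)$, one gets $a\dashrightarrow p$ (since $b$ is $F$-top) and $p\to q$ (absorption), hence $p$ and $q$ are $F$-equivalent: $q\dashrightarrow p\dashrightarrow q$. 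This $F$-equivalence is the key: with both $(p,q)$ and $(q,p)$ in $F$, one can run the \emph{entire} J\'onsson chain between $p$ and $q$ with every middle slot in $F$, obtaining $q\to p$. That closes the zigzag at this step. Your proposal never isolates this $F$-equivalence of $p$ and $q$, which is the idea that makes the finite argument go through.
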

\begin{proof}[Proof (assuming $\m a$ is finite)]
Suppose that $E$ and $F$ are admissible preorders of the finite algebra $\m
a$ and $E\lhd_{J} F$. Let $J_1,\dots,J_{2n+1}$ be the terms that witness the
J\'onsson absorption, and let $(a,b)\in F$. We must show that $(a, b)\in E$.
For ease of notation, we will write $x\dashrightarrow y$ for $(x,y)\in F$
and $x\to y$ for $(x,y)\in E$ (so we want to show $(a\dashrightarrow
b)\Rightarrow (a\to b)$).

Without loss of generality, we can assume that $\m a$ is generated by
$\{a,b\}$ so that $b$ is a top element in the order $\dashrightarrow$, and
since $\m A$ is finite we can assume that $a$ is $\to$-maximal in $\m a$ (if
there was a $c$ strictly $\to$-larger than $a$ in the algebra generated by
$\{a,b\}$, we could replace $a$ by $c$). Using
$J_1(\to,\dashrightarrow,\to)\subset \to$ and a J\'{o}nsson equation, we
have 
\[
  a=J_1(a,a,b)\to J_1(a,b,b).
\]
Now we prove by induction on $i$ that $a\to J_{2i+1}(a,b,b)$ for all $0\leq
i\leq n$. Suppose that $a\to J_{2i+1}(a,b,b)=J_{2i+2}(a,b,b)=q$. Let
$p=J_{2i+2}(a,a,b)$. Absorption gives that $p\to q$, and that $p=J_{2i+3}(a,a,b) \to
J_{2i+3}(a,b,b)$, so all we need to show is that $a\to p$.

The maximality of $a$ yields $q\to a$. Since $p$ lies in the subalgebra
generated by $\{a,b\}$, we have $a\dashrightarrow p$. Putting it together, we
have $q\to a \dashrightarrow p\to q$.

\begin{figure}\centering  
  \begin{tikzpicture}[font=\scriptsize, smooth, node distance=2cm,
    shorten <=3pt, shorten >=3pt ]

    \node (b){$b$};
    \node[below of=b, node distance=4cm]  (a){$a$};
    \node[right of=a] (q){};
    \node[right of=b] (J){$J_{2i+3}(a,b,b)$};
    \node[above of=q](p){};
    \node[right of=p] (pname){$p=J_{2i+2}(a,a,b)=J_{2i+3}(a,a,b)$};
    \node[right of=q] (qname){$q=J_{2i+1}(a,b,b)=J_{2i+2}(a,b,b)$};
    \draw
    (a) edge[dashed, ->] (b)
    (a) edge[dashed, ->] (p)
    (p) edge[dashed, ->, bend left] (q)
    (q) edge[dashed, ->, bend left] (p)
    (q) edge[->, bend left] (a)
    (a) edge[->, bend left] (q)
    (q) edge[->] (p)
    (p) edge[->] (J);
  \end{tikzpicture}
  \caption{The elements $a,b,p,q$ in the finite case of
    Theorem~\ref{thm:preorders}.}\label{figFinite} 
\end{figure}
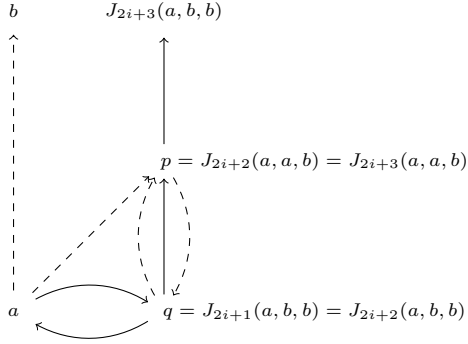 

We have obtained  $q\dashrightarrow p\dashrightarrow q$. Absorption now
allows us to prove that $q\to p$:
\begin{multline*}
  q
    = J_1(q,q,p)
    \to J_1(q,p,p)
    = J_2(q,p,p)
    \to J_2(q,q,p) \\
  = J_3(q,q,p)
    \to \dots 
    \to J_{2n+1}(q,q,p)
    \to J_{2n+1}(q,p,p)
    =p.
\end{multline*}
Therefore, $a\to q\to p\to J_{2i+3}(a,b,b)$ (see Figure~\ref{figFinite} as a
reference to what we did) and we have $a\to J_{2i+1}(a,b,b)$ for all $i$. In
particular, $a\to J_{2n+1}(a,b,b)=b$, and we are done.
\end{proof}  

Note that there is a straightforward proof of the conclusion of the above
Theorem if we assume that $E\lhd_{DJ} F$ instead of $E\lhd_{J} F$.

Using Theorem~\ref{thm:preorders}, we will now prove part~\ref{CDpart} of
Theorem~\ref{thm:main1} in the finite case. Let $\vr v$ be an idempotent CD
variety, and let $\m F_2(x,z)$ and $\m F_3(x,y,z)$ be the free two and three
generated algebras in $\vr v$. Let
\begin{gather*}
  \vr g=\big\{ t(x,y,z)\in \m F_3 
    \colon \text{$t(x,y,x)= x$ holds in $\vr v$} \big\}, \\
  F = \{( t(x,x,z),t(x,z,z) ) \colon t\in \m F_3\},
  \quad \text{and} \quad
  E = \{( t(x,x,z),t(x,z,z) ) \colon t\in \vr g\}.
\end{gather*}
Denote by $\to$ (resp. $\dashrightarrow$) the transitive closures of $E$
(resp. $F$). It is straightforward to show that $E$, $F$, $\to$, and
$\dashrightarrow$ are all admissible relations on $\m F_2$. Since $E$, $F$
are reflexive, the relations $\to$ and $\dashrightarrow$ are preorders on
$\m F_2$.

Observe that $(x,z)\in F$ (we can choose $t$ to be the projection to the
second coordinate). Let $\vr j$ be a chain of J\'onsson terms in $\vr v$.
One can easily verify that then $E\lhd^m_{\vr j} F$, from which it follows
that $\to \lhd^m_{\vr j} \dashrightarrow$. Using
Theorem~\ref{thm:preorders}, we then have that $\to$ and $\dashrightarrow$
are the same. In particular, $x\to z$, and there is a sequence of terms
$D_1,\dots,D_m\in \vr g$ witnessing this fact. Examining the terms
$D_1,\dots, D_m$, we get the following system of equalities in $\vr v$:
\begin{align*}
  & D_1(x,x,z) = x, \qquad D_m(x,z,z) = z, \\
  & \begin{aligned}
    & D_i(x,y,x) = x              & \quad & \text{for } 1\leq i\leq m, \\
    & D_i(x,z,z) = D_{i+1}(x,x,z) & \quad & \text{for } 1\leq i<m,
  \end{aligned}
\end{align*}
which means that $D_1,\dots,D_m$ are directed J\'onsson terms.

Of course, the sequence of proofs presented so far only works when $\m F_2$
is finite, but we will improve that. In fact, we will show that one can
always make J\'onsson absorption into directed J\'onsson absorption.
\begin{thm}\label{thm:main3}
Let $\vr v$ be a variety, and $\vr j$ be a chain of weak J\'onsson terms of
$\vr v$. Then there exists a chain $\vr {d}$ of weak \emph{directed}
J\'onsson terms of $\vr v$ such that for all $\m A,\m B\in \vr v$ we have
$\m B\lhd^m_{\vr j} \m A \Rightarrow \m B \lhd^m_{\vr{d}} \m A$.
\end{thm}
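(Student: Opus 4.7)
The plan is to construct the directed chain $\vr d = (D_1, \ldots, D_m)$ by iterated substitution from the given weak chain $\vr j = (J_1, \ldots, J_{2n+1})$, engineered so that (a) the $D_i$ satisfy the weak directed J\'onsson identities and (b) middle absorption with respect to $\vr j$ transfers to middle absorption with respect to $\vr d$. Condition (b) will be automatic once the $D_i$ have the correct syntactic shape: if $D(x,y,z)$ is built from the $J_k$'s by substitution so that the variable $y$ occurs only as the middle argument of some $J_k$, with the two outer arguments of those $J_k$'s being terms in $x$ and $z$ alone, then a routine induction on term complexity shows that $\m B\lhd^m_{\vr j}\m A$ forces $D(b_1,a,b_2)\in B$ for every $b_1,b_2\in B$ and $a\in A$. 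Thus the substantive task is to produce a chain of $D_i$'s of this shape that satisfies $\DJchain(m)$.

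The combinatorial construction mirrors the contour of the finite-case proof of Theorem~\ref{thm:preorders}. Working in $\m F_2 = \m F_{\vr v}(x,z)$, the weak J\'onsson identities trace a zig-zag path from $x$ to $z$, alternating forward steps witnessed by $J_{2i+1}(x,z,z) = J_{2i+2}(x,z,z)$ with backward steps witnessed by $J_{2i}(x,x,z) = J_{2i+1}(x,x,z)$; a directed chain is the analogous strictly forward path. I would straighten each backward step $p\leftrightarrow q$ by substituting the concrete expressions for $p$ and $q$ into the entire chain $\vr j$ itself, obtaining the block of identities $q = J_1(q,q,p)$, $J_1(q,p,p) = J_2(q,p,p)$, $\ldots$, $J_{2n+1}(q,p,p) = p$. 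This is precisely the argument used in the proof of Theorem~\ref{thm:preorders} to derive the forward edge $q\to p$ from the two-sided edges $q\dashrightarrow p$ and $p\dashrightarrow q$; but here it is read off as a block of consecutive $D_i$'s rather than as an assertion about edges in a preorder. Splicing these blocks in between the forward steps already present in $\vr j$ yields the full chain $\vr d$.

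The principal obstacle, relative to the finite case, is that no $\to$-maximal element is available. In the proof of Theorem~\ref{thm:preorders} the maximality of $a$ gave $q\to a$, after which $a\dashrightarrow p$ (from the subalgebra containment) delivered the two-sided $F$-edges needed to drive the argument. Without maximality, one must produce those two $F$-edges directly as explicit ternary witness terms built from the weak J\'onsson identities, exploiting that $p$ and $q$ each admit two expressions arising from adjacent $J$'s via the equations $p = J_{2i+2}(x,x,z) = J_{2i+3}(x,x,z)$ and $q = J_{2i+1}(x,z,z) = J_{2i+2}(x,z,z)$. The bookkeeping of which weak identities are invoked, together with checking that $y$ stays in a middle-argument slot throughout the whole composition and that every weak directed J\'onsson identity is verified, is where the technical work concentrates. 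I expect the resulting chain $\vr d$ to have length roughly quadratic in $n$.
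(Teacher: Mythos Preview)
Your plan has a genuine gap at exactly the point you flag as ``the principal obstacle.'' To run the J\'onsson-chain straightening $q = J_1(q,q,p)\to J_1(q,p,p)=\cdots\to J_{2n+1}(q,p,p)=p$ via middle absorption, you need \emph{both} $q\dashrightarrow p$ and $p\dashrightarrow q$. With $p=J_{2i+2}(x,x,z)$ and $q=J_{2i+2}(x,z,z)$ you get $p\dashrightarrow q$ for free (indeed $p\to q$), but the reverse arrow $q\dashrightarrow p$ is the whole problem. Your remark about ``two expressions'' gives only $q=J_{2i+1}(x,z,z)$ and $p=J_{2i+3}(x,x,z)$, neither of which witnesses an $F$-edge from $q$ to $p$: an $F$-edge requires a \emph{single} term $t$ with $t(x,x,z)=q$ and $t(x,z,z)=p$, and in the generic variety $\vr w$ defined by only the weak J\'onsson equations no such $t$ exists. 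In the finite argument this edge was supplied by $q\to a\dashrightarrow p$, using $\to$-maximality of $a$; absent maximality nothing replaces it, and local straightening of individual backward edges cannot get started.

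The paper's proof takes a genuinely different route that does not attempt to reverse any single backward edge. It views the original weak J\'onsson chain as a $k$-fence in $\m F_2$ and then \emph{shortens the fence one peak at a time}: from a $1$-fence $x\to b\leftarrow a\to d$ one builds a $(k+1)$-box (Lemma~\ref{lm:3}) and then collapses the box to a single forward edge $x\to J(b,d(b,d))$ (Lemma~\ref{lm:2}). Iterating this reduces a $(k-i+1)$-fence ending at $J^{2^i-1}$ to a $(k-i)$-fence ending at $J^{2^{i+1}-1}$ (Lemma~\ref{lm:5}); after $k$ rounds one obtains $x\to J^{2^k}(b,J^{2^k-1})$, which evaluates to $z$ once the last equation $J_{2k+1}(x,z,z)=z$ is added. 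Note that the target drifts during the construction and the resulting directed chain has length on the order of $(k+1)^{k}$, not quadratic in $k$; your quadratic estimate is a symptom that the proposed local-straightening scheme is too optimistic.
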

The proof of Theorem~\ref{thm:main3} will have to wait until
Section~\ref{section4}, after we have constructed suitable tools.

\section{Paths in the free algebra}\label{section3}  
This section contains the core of this paper -- a proof of a somewhat
technical result from which Theorem~\ref{thm:main3} follows.

We choose and fix a variety $\vr w$ whose only basic operations are
$J_1\dots, J_{2k+1}$, which satisfy the equations
\begin{equation} \begin{aligned}
  & J_1(x,x,y) = x, \\
  & \begin{aligned}
    & J_{2i+1}(x,y,y) = J_{2i+2}(x,y,y) & \quad & \text{for } 0\leq i\leq k-1, \\
    & J_{2i}(x,x,y) = J_{2i+1}(x,x,y)   & \quad & \text{for } 1\leq i\leq k.
  \end{aligned}
\end{aligned} \end{equation}
By adding more equations and operations, we could make $\vr w$ congruence
distributive or congruence modular. Our aim is to turn the chain
$J_1,\dots,J_{2k+1}$ into a longer chain of directed terms that ends at
something like $J_{2k+1}(x,z,z)$.

Notice that the operations of $\vr w$ are idempotent -- equivalently, if $\m
A\in \vr w$ then every one-element subset of $A$ is a subuniverse. Let $\m
F_3$ be the free algebra on three generators in $\vr w$, freely generated
(relative to $\vr w$) by the elements $x,y,z$. Let $\m F_2\leq \m F_3$ be
the subalgebra of $\m F_3$ freely generated by $x$ and $z$. Where feasible,
we will denote tuples of elements of $\m F_3$ without commas. For example,
we write $(xxz)$ rather than $(x,x,z)$. 

We shall be working with two binary relations $E,F$ on $\m F_2$. Define $\m
F$ to be the subalgebra of $\m f_2^2$ generated by the pairs $(x,x)$,
$(x,z)$ and $(z,z)$, that is
\[
  F = \big\{ ( t(xxz),t(xzz) ) 
    \colon \text{$t$ is a ternary term of $\vr w$} \big\}.
\]
Let $\vr j=\{J_1,\dots,J_{2k+1}\}$ and define $\vr g$ to be the set of all
$\vr w$-terms $t(x,y,z)$ such that whenever $\m A,\m B\in \vr w$ are
algebras such that $\m B\lhd^m_{\vr j} \m A$, then $\m B\lhd^m_{t}\m A$.
While the set $\vr g$ is hard to describe explicitly, one can easily see
that $\vr j\subseteq \vr g$ and that $\vr g$ is a subalgebra of $\m f_3$.

From this it immediately follows that 
\[
  E = \big\{ ( t(xxz),t(xzz) ) \colon t(x,y,z) \in \vr g \big\}
\]
is an admissible relation over $\m f_2$. Moreover, it is straightforward to
verify from the definition of absorption that $\vr g \lhd^m_{\vr j} \m f_3$,
from which it follows that $E\lhd^m_{\vr j} F$. We will view the pair
$E\lhd^m_{\vr j}F$ as a generic instance of absorption in $\vr w$. Notice
that $(x,x), (z,z)\in E$ since the projections $x, z$ belong to $\vr g$.
Thus, since all operations are idempotent, we have that the relations $E$
and $F$ are reflexive over $F_2$. That is, $(a,a)\in E$ for all $a\in F_2$.

We shall write $p\dashrightarrow q$ to indicate that the pair $(p,q)$
belongs to the transitive closure of $F$ and $p\rightarrow q$ to indicate
that $(p,q)$ belongs to the transitive closure of $E$. Both relations
$\dashrightarrow$ and $\rightarrow$ are admissible preorders of $\m F_2$
(i.e.\ they are transitive and reflexive). We leave it to the reader to
verify that $\rightarrow$ $\lhd^m_{\vr j}$ $\dashrightarrow$.

We now introduce \emph{left powers of elements} of $\m F_2$: for any
$a=a(x,z)\in \m F_2$ define $a^0=z$ and, inductively, 
\[
  a^{k+1}(x,z) = a(x,a^k).
\]
In more complicated expressions, we evaluate powers first, so for example
$a^2(b,c)$ means ``take $a^2(x,z)$ and substitute $x=b,z=c$'', giving us
$a(b,a(b,c))$. Observe that thus defined, exponentiation satisfies the
equalities $(a^k)^{\ell}=a^{k\ell}$ and $z^k=z$ for any $a\in \m F_2$ and
any $k,\ell$ nonnegative integers. 

Letting $J=J(x,z)=J_{2k+1}(x,z,z)$, we can state the core result of this
paper, whose proof takes up the remainder of this section.
\begin{thm}\label{thm:main4}
There exists $b\in \m F_2$ such that $x\rightarrow
J^{2^{k}}(b,J^{2^{k}-1})$.
\end{thm}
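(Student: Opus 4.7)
The plan is to proceed by induction on $i \in \{0,1,\ldots,k\}$, establishing at each stage the existence of $b_i \in \m F_2$ such that $x \rightarrow K_i^{2^i}(b_i, K_i^{2^i-1})$, where I write $K_i(x,z) := J_{2i+1}(x,z,z)$. Since $K_k = J$, the case $i=k$ yields the theorem.

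For the base case $i=0$, observe that $J_1 \in \vr j \usub \vr g$ and the equation $J_1(x,x,z) = x$ give
\[
  (x, K_0(x,z)) \;=\; (J_1(x,x,z),\, J_1(x,z,z)) \;\in\; E,
\]
so $x \rightarrow K_0(x,z) = K_0^{1}(x, K_0^{0})$, and $b_0 := x$ works.

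The inductive step is modelled on the finite-case proof of Theorem~\ref{thm:preorders}. There, the transition from the arrow $a \rightarrow J_{2i+1}(a,b,b)$ to $a \rightarrow J_{2i+3}(a,b,b)$ used two intermediate elements $q := J_{2i+1}(a,b,b) = J_{2i+2}(a,b,b)$ and $p := J_{2i+2}(a,a,b) = J_{2i+3}(a,a,b)$, together with the inductive arrow $a \rightarrow q$, the absorption arrow $p \rightarrow q$, the $F$-arrow $a \dashrightarrow p$, and most critically the arrow $q \rightarrow a$ obtained from the maximality of $a$; these combined to give $q \dashrightarrow p \dashrightarrow q$, and then a full pass through the J\'onsson absorption chain yielded $q \rightarrow p$ and the updated arrow $a \rightarrow J_{2i+3}(a,b,b)$. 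In our infinite setting there is no maximal element, so the arrow playing the role of $q \rightarrow a$ must be manufactured. My plan is to produce it by reusing the inductive hypothesis in a substituted instance: wherever the finite proof closes a loop at $a$ via maximality, I instead substitute in an appropriate $K_i$-iterate to which the IH already supplies an $E$-arrow, so the loop closes farther up the $K_i$-tower. This second application of the IH, nested inside the first, is precisely what forces the exponent to grow from $2^i$ to $2^{i+1}$.

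The principal obstacle is the bookkeeping required to keep the argument inside the relation $\rightarrow$, not merely $\dashrightarrow$. Each time we apply a middle absorption to lift an $F$-arrow to an $E$-arrow, the outer term must lie in $\vr g$, so one has to verify that every outer composition built in the inductive step still belongs to $\vr g$ (and not just to the full term clone). A second delicate point is making the index arithmetic collapse correctly: one needs the nested $K_i^{2^i}$ substitutions, recombined through the glueing equations $J_{2i+1}(x,y,y) = J_{2i+2}(x,y,y)$ and $J_{2i+2}(x,x,y) = J_{2i+3}(x,x,y)$, to produce exactly an expression of the form $K_{i+1}^{2^{i+1}}(b_{i+1}, K_{i+1}^{2^{i+1}-1})$ on the right-hand side. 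Synchronising these two kinds of bookkeeping is, I expect, the main effort of the proof.
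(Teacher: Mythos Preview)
Your intuition is on the right track---the doubling of the exponent and the need to replace the maximality argument are both genuine features of the situation---but the inductive invariant you propose is too weak to carry the argument, and the gap is not merely bookkeeping.

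The difficulty is this. Your hypothesis at stage~$i$ is a single forward arrow $x\rightarrow K_i^{2^i}(b_i,K_i^{2^i-1})$. Lemma~\ref{lm:1} lets you substitute along any $a\dashrightarrow b$, but this only produces \emph{further forward arrows} $a\rightarrow q_0(a,b)$; it never manufactures a $\leftarrow$. Yet the finite-case step you want to mimic uses the cycle $q\dashrightarrow p\dashrightarrow q$ to run the full J\'onsson chain, and the second half of that cycle came from maximality, which gave $q\rightarrow a$, a genuinely \emph{backward} arrow relative to the direction of the induction. No number of nested substitutions of a forward-only hypothesis will close such a loop: in $\m F_2$ we have $x\dashrightarrow c$ for every~$c$, but we do \emph{not} have $c\dashrightarrow x$ or $c\rightarrow x$ in general, and nothing in your invariant records any element above $x$ that also admits an arrow back down. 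So the phrase ``the loop closes farther up the $K_i$-tower'' hides the missing idea rather than supplying it.

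The paper's remedy is to strengthen the invariant from a single arrow to a \emph{fence}: a sequence $x=a_0\rightarrow b_1\leftarrow a_1\rightarrow b_2\leftarrow\cdots\leftarrow a_{k-i}\rightarrow J^{2^{i+1}-1}$. The backward arrows $b_\ell\leftarrow a_\ell$ are exactly the missing ingredient; they are present already at the base case (the $k$-fence $x\rightarrow J_1(x,z,z)\leftarrow J_2(x,x,z)\rightarrow\cdots$) and are consumed one at a time. A $1$-fence is converted into a $(k+1)$-box (Lemma~\ref{lm:3}), and a box is collapsed by one full pass through the J\'onsson chain into a single arrow ending at $J(b,d(b,d))$ (Lemma~\ref{lm:2}); this is what shortens the fence by one while squaring the $J$-exponent. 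If you try to repair your scheme by recording, alongside $x\rightarrow K_i^{\,2^i}(\cdots)$, enough auxiliary elements with arrows pointing back, you will essentially have reinvented the fence.
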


The next lemma is essential for our proof of Theorem~\ref{thm:main4}. Every
endomorphism of $\m F_2$ is uniquely determined by the elements to which it
sends $x$ and $z$, and, conversely, for any pair $a,b\in \m F_2$ there is an
endomorphism $\sigma$ of $\m F_2$ that sends each $c(x,z)\in \m F_2$ to
$c(a,b)=c(a(x,z),b(x,z))$ (in particular $\sigma(x)=a$ and $\sigma(z)=b$).
An endomorphism $\sigma$ of $\m F_2$ will be called \emph{special} if
$\sigma (x)\dashrightarrow \sigma(z)$.

\begin{lm}\label{lm:1}   
Every special endomorphism of $\m F_2$ respects $\dashrightarrow$ and
$\rightarrow$. That is, given  $a\dashrightarrow b$,
\begin{itemize}
  \item if $c=c(x,z)\dashrightarrow d(x,z) = d$ then $c(a,b)\dashrightarrow
    d(a,b)$; and
  \item if $c = c(x,z)\rightarrow d(x,z) = d$ then $c(a,b)\rightarrow
    d(a,b)$.
\end{itemize}
\end{lm}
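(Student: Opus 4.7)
The plan is to notice that $\dashrightarrow$ and $\rightarrow$ are the transitive closures of $F$ and $E$, so any hypothesis $c\dashrightarrow d$ (resp.\ $c\rightarrow d$) is witnessed by a finite chain of $F$-steps (resp.\ $E$-steps). By transitivity of the conclusion, it suffices to prove the single-step version: for any ternary term $t$ and any $a\dashrightarrow b$ in $\m F_2$, one has $t(a,a,b)\dashrightarrow t(a,b,b)$ in general, and $t(a,a,b)\rightarrow t(a,b,b)$ whenever $t\in\vr g$.

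For the $\dashrightarrow$ part, I would build an explicit $F$-path. Fix a witness $a=e_0,e_1,\dots,e_n=b$ with each $(e_i,e_{i+1})\in F$. By reflexivity of $F$ over $\m F_2$, the pairs $(a,a)$ and $(b,b)$ also lie in $F$, and since $F$ is a subalgebra of $\m F_2^2$, applying $t$ coordinate-wise to the triple $(a,a),(e_i,e_{i+1}),(b,b)$ produces the pair $(t(a,e_i,b),t(a,e_{i+1},b))\in F$. Concatenating these single-step pairs gives a chain in $F$ from $t(a,a,b)$ to $t(a,b,b)$, establishing $t(a,a,b)\dashrightarrow t(a,b,b)$.

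For the $\rightarrow$ part, the analogous path argument fails, because the only path witnessing $a\dashrightarrow b$ is guaranteed to lie in $F$, not in $E$. Instead I would invoke the middle absorption $\rightarrow\lhd^m_{\vr j}\dashrightarrow$ noted just before this lemma. By the definition of $\vr g$, membership $t\in\vr g$ upgrades this to $\rightarrow\lhd^m_t\dashrightarrow$. Applying middle absorption with respect to $t$ to the triple of pairs $(a,a),(a,b),(b,b)$ -- with the outer two lying in $\rightarrow$ by reflexivity and the middle one lying in $\dashrightarrow$ by hypothesis -- yields $t(a,a,b)\rightarrow t(a,b,b)$ in a single step.

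The main obstacle is recognising that the two bullets require genuinely different tools: the $F$-bullet is a routine coordinate-wise admissibility argument, while the $E$-bullet cannot avoid invoking the definition of $\vr g$, precisely because the witnessing $\dashrightarrow$-path need not lie in $E$. Once this distinction is understood, the remainder is a direct application of the relevant definitions.
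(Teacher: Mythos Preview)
Your proposal is correct and follows essentially the same approach as the paper: reduce to a single $F$-step (resp.\ $E$-step) by transitivity, then handle the two cases with the appropriate tool. For the $\rightarrow$ bullet your argument is word-for-word the paper's: use $t\in\vr g$ to upgrade $\rightarrow\lhd^m_{\vr j}\dashrightarrow$ to $\rightarrow\lhd^m_t\dashrightarrow$, then apply middle absorption to the triple $(a,a),(a,b),(b,b)$.

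The only minor difference is in the $\dashrightarrow$ bullet. You decompose $a\dashrightarrow b$ into an explicit $F$-chain $a=e_0,\dots,e_n=b$ and push it through the middle slot of $t$ using admissibility of $F$, obtaining an $F$-chain from $t(a,a,b)$ to $t(a,b,b)$. The paper instead uses directly that $\dashrightarrow$ (not just $F$) is admissible, so that $s$ applied to the three $\dashrightarrow$-related columns $(a,a),(a,b),(b,b)$ immediately yields $s(a,a,b)\dashrightarrow s(a,b,b)$ in one shot. Both are valid; the paper's version is one line shorter because it appeals to the (already noted) admissibility of the transitive closure rather than rebuilding it from $F$.
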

\begin{proof}
To show that $\sigma$, moving $x$ to $a$ and $z$ to $b$ with
$a\dashrightarrow b$, respects $\dashrightarrow$, it suffices to show that
$c \Frel d$ implies $c(a,b)\dashrightarrow d(a,b)$. Let $c(x,z) \Frel
d(x,z)$. Thus there is a term $s(u,v,w)$ so that
\[
  c(xz)=s(xxz) 
  \qquad \text{and} \qquad
  d(xz)=s(xzz).
\]
Applying $\sigma$ to these equations, we have that
\[
  c(a,b)=s(a,a,b)
  \qquad \text{and} \qquad
  d(a,b)=s(a,b,b),
\]
or in a more suggestive matrix form:
\[
  \pmat{ c(a,b) \\
        d(a,b) }
  = s\pmat{ a & a & b \\
            a & b & b }.
\]
Now observe that in each of the three columns of the matrix on the right
hand side, the rows are related by $\dashrightarrow$. Since $s$ preserves
$\dashrightarrow$, we have $c(a,b)\dashrightarrow d(a,b)$, as required.

To show that $\sigma$ respects $\rightarrow$, it again suffices to show that
$c \Erel d$ implies $c(a,b)\rightarrow d(a,b)$. Let $c(x,z) \Erel d(x,z)$.
As before, there is a term $s(u,v,w)$ such that
\[
  c(xz)=s(xxz) 
  \qquad \text{and} \qquad
  d(xz)=s(xzz),
\]
but this time we also know that $s(x,y,z)\in \vr g$. We again apply $\sigma$
and write the result in a matrix form:
\[
  \pmat{ c(a,b) \\
        d(a,b) }
  = s\pmat{ a & a & b \\
            a & b & b }.
\]
Observe that in the first and third columns on the right hand side, the rows
are $\rightarrow$-related, while the middle column is
$\dashrightarrow$-related. Since $s\in \vr g$ it follows that
$\rightarrow\lhd^m_s \dashrightarrow$ and hence the pair on the left hand
side must be $\rightarrow$-related. Therefore $c(a,b)\rightarrow d(a,b)$, as
required.
\end{proof}  

Using Lemma~\ref{lm:1}, it is an easy exercise to show that if $a\rightarrow
b$, then $a^n\rightarrow b^n$ for any positive integer $n$.

\begin{df}\label{df:1}   
Let $n$ be a nonnegative integer. An \emph{$n$-fence} from $c$ to $d$,
denoted by $F(c,d)$, is a sequence of elements of $\m F_2$ satisfying
\[
  c 
  = a_0
  \rightarrow b_1
  \leftarrow a_1
  \rightarrow b_2
  \leftarrow a_2 
  \rightarrow \dots 
  \leftarrow a_n
  \rightarrow b_{n+1}
  = d.
\]

Let $n$ be a positive integer. An \emph{$n$-box $B$} is a sequence
$q_1\dashrightarrow p_1\dashrightarrow q_2\dashrightarrow p_2\dashrightarrow
q_3\dashrightarrow \dots \dashrightarrow q_n\dashrightarrow p_n$ such that
\[
  p_1 \rightarrow p_2 \rightarrow \dots \rightarrow p_n
  \qquad \text{and} \qquad 
  q_1 \rightarrow q_2 \rightarrow \dots \rightarrow q_n.
\]
An \emph{$n$-box} from $c$ to $b$ and $d$, denoted by $B(c;b,d)$, is an
$n$-box with $c= q_1$, $q_n\rightarrow b$, and $p_n\rightarrow d$. Note that
a $0$-fence from $c$ to $d$ is simply $c\rightarrow d$.
\end{df} 

\begin{figure} \centering
\begin{subfigure}{0.5\textwidth} \centering 
\begin{tikzpicture} [ font=\scriptsize, smooth, node distance=2em
                    , text height=1.5ex, text depth=0.25ex ]
  \node                          (a0) {$c=a_0$};
  \node [above=of a0,xshift=2em] (b1) {$b_1$};
  \node [below=of b1,xshift=2em] (a1) {$a_1$};
  \node [above=of a1,xshift=2em] (b2) {$b_2=d$};

  \foreach \a/\b in {a0/b1,a1/b1,a1/b2} {
    \draw[->] (\a) -- (\b);
  }
\end{tikzpicture} 
\caption{A $1$-fence $F(c,d)$.}
\end{subfigure}%
\begin{subfigure}{0.5\textwidth} \centering 
\begin{tikzpicture} [ font=\scriptsize, smooth, node distance=2em
                    , text height=1.5ex, text depth=0.25ex ]
  \node                          (a0) {$c=a_0$};
  \node [above=of a0,xshift=2em] (b1) {$b_1$};
  \node [below=of b1,xshift=2em] (a1) {$a_1$};
  \node [above=of a1,xshift=2em] (b2) {};

  \node [right=of b2]            (bn)  {};
  \node [below=of bn,xshift=2em] (an)  {$a_{n}$};
  \node [above=of an,xshift=2em] (bn1) {$b_{n+1}=d$};

  \coordinate (temp1) at ($(a0)!0.5!(b1)$);
  \coordinate (temp2) at ($(b2)!0.5!(bn)$);
  \node at (temp1-|temp2) {$\dots$};

  \foreach \a/\b in {a0/b1,a1/b1,a1/b2,an/bn,an/bn1} {
    \draw[->] (\a) -- (\b);
  }
\end{tikzpicture} 
\caption{An $n$-fence $F(c,d)$.}
\end{subfigure}
\caption{Pictures of fences.}
\end{figure}
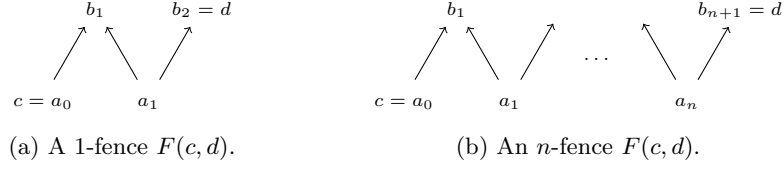 
\begin{figure} \centering
\begin{tikzpicture} [ font=\scriptsize, smooth, node distance=2em
                    , text height=1.5ex, text depth=0.25ex ]
  \node                          (c)  {$c=q_1$};
  \node [below=of c ,xshift=2em] (p1) {$p_1$};
  \node [above=of p1,xshift=2em] (q1) {$q_2$};
  \node [below=of q1,xshift=2em] (p2) {$p_2$};
  \node [above=of p2,xshift=2em] (q2) {$q_3$};
  \node [below=of q2,xshift=2em] (p3) {};
  \node [above=of p3,xshift=2em] (q3) {};

  \node [right=of p3,xshift=2em]  (pn2) {$\ $};
  \node [above=of pn2,xshift=2em] (qn1) {$\ $};
  \node [below=of qn1,xshift=2em] (pn1) {$p_{k}$};
  \node [above=of pn1,xshift=2em] (qn)  {$q_{k+1}$};
  \node [below=of qn,xshift=2em]  (pn)  {$p_{k+1}$};

  \node [right=of pn] (d) {$d$};
  \node [right=of qn] (b) {$b$};

  \coordinate (temp1) at ($(c)!0.5!(p1)$);
  \coordinate (temp2) at ($(q3)!0.5!(pn2)$);
  \node at (temp2) {$\dots$};

  \foreach \a/\b in
  {c/q1,q1/q2,p1/p2,q2/q3,p2/p3,qn1/qn,pn2/pn1,pn1/pn,pn/d,qn/b} {
    \draw[->] (\a) -- (\b);
  }
  \foreach \a/\b in
  {c/p1,p1/q1,q1/p2,p2/q2,q2/p3,p3/q3,pn2/qn1,qn1/pn1,pn1/qn,qn/pn} {
    \draw[->,dashed] (\a) -- (\b);
  }
\end{tikzpicture} 
\caption{A $(k+1)$-box $B(c;b,d)$.}
\label{fig:boxes}
\end{figure}
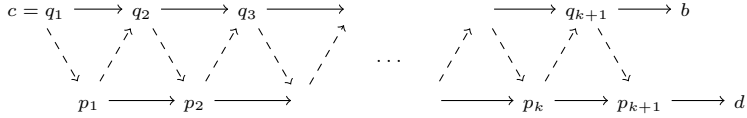 

The next three lemmas contain the heart of the proof of
Theorem~\ref{thm:main4}.

\begin{lm}\label{lm:2}   
Suppose that $B(c;b,d)$ is a $(k+1)$-box. Then $c\rightarrow
J_{2k+1}(b,d,d)$.
\end{lm}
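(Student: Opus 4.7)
My plan is to prove by induction on $i$ that
\[
  c \rightarrow J_{2i+1}(q_{i+1}, p_{i+1}, p_{i+1})
  \qquad \text{for every } 0 \le i \le k,
\]
and then to apply $J_{2k+1}$ coordinatewise to the two $\rightarrow$-relations $q_{k+1} \rightarrow b$ and $p_{k+1} \rightarrow d$ furnished by the definition of a box. Since $\rightarrow$ is an admissible preorder of $\m F_2$, any term of $\vr w$ acts coordinatewise on pairs in $\rightarrow$, so the induction at $i=k$ combined with this last coordinatewise application, together with transitivity of $\rightarrow$, yields the desired $c \rightarrow J_{2k+1}(b,d,d)$.

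For the base case $i = 0$, the identity $J_1(x,x,y) = x$ lets me rewrite $c = q_1 = J_1(q_1, q_1, p_1)$. I then apply the middle absorption $\rightarrow \lhd^m_{J_1} \dashrightarrow$ to the three columns $(q_1,q_1)$, $(q_1,p_1)$, $(p_1,p_1)$, whose outer two are in $\rightarrow$ by reflexivity and whose middle is in $\dashrightarrow$ because the box starts with $q_1 \dashrightarrow p_1$. This produces $c \rightarrow J_1(q_1,p_1,p_1)$. For the inductive step, starting from $c \rightarrow J_{2i+1}(q_{i+1}, p_{i+1}, p_{i+1})$, I rewrite using $J_{2i+1}(x,y,y) = J_{2i+2}(x,y,y)$ and then apply middle absorption with $J_{2i+2}$ to the columns $(q_{i+1},q_{i+2})$, $(p_{i+1},q_{i+2})$, $(p_{i+1},p_{i+2})$; the outer two are in $\rightarrow$ from the $q$- and $p$-monotone chains of the box, and the middle is in $\dashrightarrow$ because of the zig-zag link $p_{i+1} \dashrightarrow q_{i+2}$. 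This gives $J_{2i+2}(q_{i+1}, p_{i+1}, p_{i+1}) \rightarrow J_{2i+2}(q_{i+2}, q_{i+2}, p_{i+2})$. Finally I rewrite with $J_{2i+2}(x,x,y) = J_{2i+3}(x,x,y)$ and apply middle absorption once more with $J_{2i+3}$ to the columns $(q_{i+2},q_{i+2})$, $(q_{i+2},p_{i+2})$, $(p_{i+2},p_{i+2})$, where the middle pair uses the box link $q_{i+2} \dashrightarrow p_{i+2}$, obtaining $J_{2i+3}(q_{i+2},q_{i+2},p_{i+2}) \rightarrow J_{2i+3}(q_{i+2},p_{i+2},p_{i+2})$. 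Transitivity of $\rightarrow$ then closes the induction.

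The only real obstacle is bookkeeping. At each inductive step one must match the parities of the J\'{o}nsson indices against the two families of identities defining $\vr w$, and one must keep track of which columns are supplied by the $\rightarrow$-chains on $p_i$ and $q_i$, by reflexivity, or by one of the $\dashrightarrow$-links of the box. Both checks are immediate once the box structure is unpacked, and the parity constraints $0 \le i \le k-1$ in the two families of equations align precisely with the range of $i$ in which an inductive step is actually performed, so the argument terminates exactly at $J_{2k+1}$.
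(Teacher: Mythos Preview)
Your proof is correct and follows essentially the same route as the paper's: both walk through the $(k+1)$-box by alternately applying the J\'onsson identities and the middle-absorption $\rightarrow\lhd^m_{\vr j}\dashrightarrow$, reaching $J_{2k+1}(q_{k+1},p_{k+1},p_{k+1})$ and then using $q_{k+1}\rightarrow b$, $p_{k+1}\rightarrow d$ together with admissibility of $\rightarrow$ to finish. The only cosmetic difference is that you phrase the argument as an explicit induction on $i$ with anchor points $J_{2i+1}(q_{i+1},p_{i+1},p_{i+1})$, whereas the paper writes out the chain directly with anchor points $J_{2i-1}(q_i,q_i,p_i)$; the absorption steps and identity rewrites used are the same up to this reindexing.
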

\begin{proof} 
Label the vertices of the box from left to right according to
Figure~\ref{fig:boxes} as $q_1,p_1,q_2,p_2,\dots,q_{k+1},p_{k+1},b,d$.

Observe that since $q_1\rightarrow q_2$, $p_1\rightarrow p_2$, and
$q_1\dashrightarrow p_1\dashrightarrow q_2$ (and $\rightarrow \lhd^m_{\vr j}
\dashrightarrow$), we have the sequence: 
\[
  c 
  = J_1(q_1q_1p_1)
  \rightarrow J_1(q_2p_1p_1)
  = J_2(q_2p_1p_1)
  \rightarrow J_2(q_2q_2p_2)
  = J_3(q_2q_2p_2).
\]
Continuing in this vein, we obtain for $i$ ranging from 1 to $k$ the
sequence:
\begin{multline*}
  c
    \rightarrow J_{2i-1}(q_iq_ip_i)
    \rightarrow J_{2i-1}(q_{i+1}p_ip_i)
  = J_{2i}(q_{i+1}p_ip_i) \\
  \rightarrow J_{2i}(q_{i+1}q_{i+1}p_{i+1})
    = J_{2i+1}(q_{i+1}q_{i+1}p_{i+1}).
\end{multline*}
Letting $i=k$ (and thus $2i+1=n$), we conclude that 
\[
  c
  \rightarrow J_{2k+1}(q_{k+1},q_{k+1},p_{k+1})
  \rightarrow J_{2k+1}(q_{k+1},p_{k+1},p_{k+1}).
\]
Finally, using $q_{k+1}\rightarrow b$ and $p_{k+1}\rightarrow d$, we get
$c\rightarrow J_{2k+1}(b,d,d)$.
\end{proof}  

\begin{lm}\label{lm:3}   
Assume that there is a $1$-fence $x\rightarrow b\leftarrow a\rightarrow d$.
Then for every $\ell>1$ there is an $\ell$-box $B(x;b,d(b,d))$.
\end{lm}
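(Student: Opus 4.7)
My plan is to proceed by induction on $\ell \ge 2$. The workhorse in both cases is Lemma~\ref{lm:1}: every special endomorphism of $\m F_2$ preserves both $\rightarrow$ and $\dashrightarrow$, and the $1$-fence provides several such endomorphisms --- for instance, $x \mapsto x,\ z \mapsto b$ is special because $x \to b$ gives $x \dashrightarrow b$, and $x \mapsto a,\ z \mapsto d$ is special because $a \to d$ gives $a \dashrightarrow d$.

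For the base case $\ell = 2$, I will construct a $2$-box $B(x; b, d(b,d))$ directly from the $1$-fence data. First, apply $\sigma_b : x \mapsto x,\ z \mapsto b$ to the three edges of the $1$-fence to obtain a second $1$-fence
\[
x \to b(x,b) \;\leftarrow\; a(x,b) \to d(x,b).
\]
Combined with the original edges and the base pair $(x,z) \in F$, this supplies enough $\rightarrow$- and $\dashrightarrow$-relations to pin down explicit $p_1, q_2, p_2$ meeting the $2$-box pattern, with $p_2$ chosen so that a further special-endomorphism step lands it $\rightarrow$-below $d(b,d)$.

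For the inductive step $\ell \to \ell+1$, I will insert one additional level into an existing $\ell$-box by applying yet another special endomorphism --- whose specialness comes from a $\rightarrow$-edge already present in the box --- producing a longer zigzag without altering the right-hand endpoints $b$ and $d(b,d)$. Every new edge is certified by Lemma~\ref{lm:1}.

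The main obstacle is that the $1$-fence does \emph{not} guarantee $b \dashrightarrow d$: the $1$-fence only tells us that $a$ is $\dashrightarrow$-below both $b$ and $d$, so $b$ and $d$ are not known to be $\dashrightarrow$-connected. Consequently, the ``natural'' endomorphism $x \mapsto b,\ z \mapsto d$, which would send the term $d$ straight to $d(b,d)$ and collapse the problem in one stroke, is not known to be special, and Lemma~\ref{lm:1} gives us nothing across it. The proof must therefore reach $d(b,d)$ only by composing and iterating endomorphisms that \emph{are} provably special, such as $x \mapsto x,\ z \mapsto b$ and $x \mapsto a,\ z \mapsto d$; the intermediate vertices $q_i, p_i$ become correspondingly intricate term-compositions of $a, b, d, x, z$. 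Once the right $q_i, p_i$ are identified, verifying the box conditions reduces to routine bookkeeping via Lemma~\ref{lm:1}.
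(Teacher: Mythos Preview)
Your proposal is a strategy outline rather than a proof: the vertices $p_i,q_i$ are never written down, and the inductive step (``insert one additional level \ldots\ by applying yet another special endomorphism'') is left entirely unspecified. The more serious issue is that the two special endomorphisms you single out, $\sigma_b\colon x\mapsto x,\ z\mapsto b$ and $\sigma_{ad}\colon x\mapsto a,\ z\mapsto d$, do not by themselves reach $d(b,d)$. Substituting $b$ for $z$ sends the $p$-side toward terms of the form $d(\,\cdot\,,b)$, which collapse to $b$ rather than to $d(b,d)$; substituting $(a,d)$ for $(x,z)$ moves the starting vertex away from $x$. There is no evident way to assemble even the $2$-box $B(x;b,d(b,d))$ from these two maps together with admissibility, and you give no concrete indication of how to do so.

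What the paper exploits, and you do not mention, is that $x$ is a $\dashrightarrow$-minimum of $\m F_2$: for every $c=c(x,z)\in F_2$ the ternary term $t(u,v,w)=c(u,v)$ gives $(t(xxz),t(xzz))=(x,c)\in F$, so $x\dashrightarrow c$. In particular $x\dashrightarrow a$, whence the endomorphism $\sigma_a\colon x\mapsto x,\ z\mapsto a$ is special even though no $\rightarrow$-edge of the fence terminates at $a$. The paper then writes the whole box down directly (no induction on $\ell$): $q_1=x$, $p_1=a(x,a)$, and $q_i=b(q_{i-1},a)$, $p_i=a(q_i,a)$ for $2\le i\le\ell$, after which all box conditions are checked one by one. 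Substituting $a$ (not $b$) for $z$ is precisely the point: since $a\to b$ and $a\to d$, one can at the right-hand end steer $q_\ell\to b$ and simultaneously steer $p_\ell=a(q_\ell,a)\to d(q_\ell,a)\to d(b,a)\to d(b,d)$. Your observation that $b\dashrightarrow d$ is unavailable is correct but a red herring; the paper's route never needs it.
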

\begin{proof} 
We put $q_1=x$ and $p_1=a(x,a)$. For $2\leq i\leq \ell$, let 
\[
  q_{i}=b(q_{i-1},a)
  \qquad \text{and} \qquad 
  p_{i}=a(q_i,a).
\]
We claim that the result is an $\ell$-box $B(x;b,d(b,d))$. The rest of the
proof consists of verifying the various $\dashrightarrow$ and $\rightarrow$
relations involved. We invite the reader to use Figure~\ref{figkbox} for a
reference (note that some diagonal edges are solid where the definition of a
box required only dashed edges -- this is all right since $\rightarrow$ is a
subset of $\dashrightarrow$).

\begin{figure} \centering
\begin{tikzpicture} [ font=\scriptsize, smooth, node distance=4em, 
  text height=1.5ex, text depth=0.25ex ]

  \coordinate                          (q1);
  \coordinate [below=of q1,xshift=3.5em] (p1);
  \coordinate [above=of p1,xshift=3.5em] (q2);
  \coordinate [below=of q2,xshift=3.5em] (p2);
  \coordinate [above=of p2,xshift=3.5em] (q3);

  \coordinate [right=of p2,  xshift=3.5em] (pk-1);
  \coordinate [above=of pk-1,xshift=3.5em] (qk);
  \coordinate [below=of qk,  xshift=3.5em] (pk);
  \coordinate [right=of qk]                (b);
  \coordinate [right=of pk]                (dbd);

  \node (q1) at (q1) {$x=q_1$};
  \node (p1) at (p1) {$p_1=a(x,a)$};
  \node (q2) at (q2) {$q_2=b(x,a)$};
  \node (p2) at (p2) {$p_2=a(q_2,a)$};
  \node (q3) at (q3) {$q_3=b(q_2,a)$};
  \node (pk-1) at (pk-1) {$p_{\ell-1}$};
  \node (qk) at (qk) {$q_\ell$};
  \node (pk) at (pk) {$p_\ell$};
  \node (b) at (b) {$b$};
  \node (dbd) at (dbd) {$d(b,d)$};

  \node at ($(q3)!0.5!(pk-1)$) {$\dots$};

  \foreach \a/\b in 
  {q1/q2,p1/p2,q2/q3,p1/q2,p2/q3,pk-1/qk,qk/b,pk-1/pk,pk/dbd} {
    \draw[->] (\a) -- (\b); 
  }
  \foreach \a/\b in {q1/p1,q2/p2,qk/pk} {
    \draw[->,dashed] (\a) -- (\b);
  }
\end{tikzpicture} 
\caption{The $\ell$-box $B(x;b,d(b,d))$.}
\label{figkbox}
\end{figure}
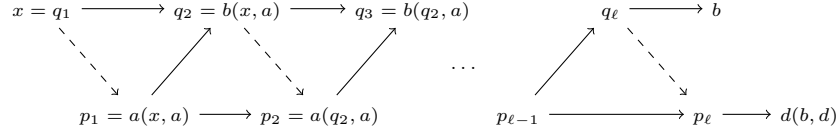 

Observe that $x\dashrightarrow a$, so the endomorphism $\sigma$ sending $x$
to $x$ and $z$ to $a$ is special. It is easy to see that $\sigma(b)=b(x,a)=
q_2$ and $\sigma(a)=a(x,a)=p_1$. Since $x\rightarrow b \leftarrow a$, it
follows by Lemma~\ref{lm:1} that $x\rightarrow q_2\leftarrow p_1$.

We now proceed by induction to prove that $q_i\rightarrow q_{i+1}$ and
$p_i\rightarrow p_{i+1}$ for all $i=1,\dots,\ell-1$. We already know the
arrows for $i=1$, and from $q_{i-1}\rightarrow q_i$, we easily get both
$q_i=b(q_{i-1},a) \rightarrow b(q_i,a)=q_{i+1}$ and $p_i\rightarrow p_{i+1}$
for all applicable values of $i$.

Observe that $q_1=x\dashrightarrow a$. Since $q_{i}=b(q_{i-1},a)$, induction
gives us that $q_i\dashrightarrow a$ for all $i$. Repeated use of this set
of dashed arrows allows us to prove that $p_i\rightarrow q_{i+1}$ and
$q_{i}\dashrightarrow p_i$ for all $i$ in the following way. Consider first
the endomorphism $\sigma$ sending $x$ to $q_i$ and $z$ to $a$. Since
$q_i\dashrightarrow a$, this is a special endomorphism. Since $a\rightarrow
b$, we have $p_i=\sigma(a)\rightarrow \sigma(b)=q_{i+1}$ for all $i$. To see
$q_i\dashrightarrow p_i$, observe that $q_i=a(q_i,q_i)\dashrightarrow
a(q_i,a)=p_i$.

All that remains now is to get the two arrows at the rightmost end of the
box. Similarly to the previous paragraph, it is easy to prove by induction
on $i$ that $q_i\rightarrow b$ for all $i$, so in particular
$q_{\ell}\rightarrow b$. To obtain $p_{\ell}\rightarrow d(b,d)$, observe
that $p_{\ell}=a(q_{\ell},a)\rightarrow d(q_{\ell},a)\rightarrow
d(b,a)\rightarrow d(b,d)$ (we have used first Lemma~\ref{lm:1}, then
$q_{\ell}\rightarrow b$, and finally $a\rightarrow d$).
\end{proof}  

\begin{lm}\label{lm:5}   
For each $0\leq i < k$, there exists a $(k-i)$-fence from $x$ to
$J^{2^{i+1}-1}$. (Recall that $J=J(x,z)=J_{2k+1}(x,z,z)$.)
\end{lm}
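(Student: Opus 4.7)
The plan is to prove Lemma~\ref{lm:5} by induction on $i$, running from $i = 0$ up to $i = k-1$.

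For the base case $i = 0$, I would construct the $k$-fence from $x$ to $J$ directly from the chain $J_1,\dots,J_{2k+1}$. Put $a_0 = x$ and, for $1 \le j \le k$, set $a_j = J_{2j+1}(x,x,z) = J_{2j}(x,x,z)$ and $b_j = J_{2j-1}(x,z,z) = J_{2j}(x,z,z)$ (each equality an instance of the weak J\'onsson identities), together with $b_{k+1} = J_{2k+1}(x,z,z) = J$. Because each $J_n$ lies in $\vr j \subseteq \vr g$, the pair $(J_n(xxz), J_n(xzz))$ sits in $E$, so $J_{2j-1}$ witnesses the forward arrow $a_{j-1}\to b_j$ and $J_{2j}$ witnesses the backward arrow $a_j\to b_j$; concatenating produces the $k$-fence from $x$ to $J$.

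For the inductive step, assume a $(k-i)$-fence $F_i : x = a_0 \to b_1 \leftarrow a_1 \to \cdots \leftarrow a_{k-i} \to b_{k-i+1} = J^m$ is given, with $m = 2^{i+1}-1$, and aim to produce a $(k-i-1)$-fence from $x$ to $J^{2m+1} = J^{2^{i+2}-1}$. I would first record the preliminary fact that $x \dashrightarrow J^n$ for every $n \ge 0$: starting from $(x,x), (x,z), (z,z) \in F$ and iterating, applying $J_{2k+1}$ to the triple $(x,x), (x,J^{n-1}), (x,J^{n-1}) \in F$ yields $(x, J^n) \in F$. Consequently $\sigma_m : x \mapsto x$, $z \mapsto J^m$ is a special endomorphism, and by Lemma~\ref{lm:1} its image of $F_i$ is itself a $(k-i)$-fence from $x$ to $J^m(x,J^m) = J^{2m}$.

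The central step is to feed the initial sub-1-fence $x \to b_1 \leftarrow a_1 \to b_2$ of $F_i$ into Lemma~\ref{lm:3} with $\ell = k+1$, producing a $(k+1)$-box $B(x;\, b_1,\, b_2(b_1,b_2))$; Lemma~\ref{lm:2} then converts the box into the direct arrow $x \to J(b_1, b_2(b_1,b_2))$. The doubling $m \mapsto 2m+1$ of the target exponent is exactly the effect of the substitution $d(b,d)$ in Lemma~\ref{lm:3} composed with the outer $J$. The $(k-i-1)$-fence is then assembled by gluing this direct arrow onto an appropriate $\sigma$-shifted copy of the tail $b_2 \leftarrow a_2 \to b_3 \leftarrow \cdots \to J^m$ of $F_i$, with the shift chosen so that the tail endpoint aligns with $J^{2m+1}$.

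The hard part will be the gluing: one must verify that the chosen shift preserves all the arrows of the tail (an application of Lemma~\ref{lm:1} once the shift is verified special) and that the resulting sequence has exactly one fewer zigzag than $F_i$. To keep the required substitution endomorphisms special throughout, I expect one needs to strengthen the inductive hypothesis to include the extra condition $x \dashrightarrow a_j$ for every $a$-vertex of the fence; this is immediate in the base case (the term $t(u,v,w) = J_{2j}(u,u,w)$ lies in $\vr g$ and places $(x, a_j) \in E \subseteq F$), and one then has to propagate it through the inductive step so that all the endomorphisms used in the gluing remain special.
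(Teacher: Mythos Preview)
Your outline matches the paper's proof closely: the same induction on $i$, the same base case (the paper writes $b_\ell=J_{2\ell-1}(x,z,z)$, $a_\ell=J_{2\ell}(x,x,z)$), and the same central move of feeding the initial $1$-fence $x\to b_1\leftarrow a_1\to b_2$ through Lemma~\ref{lm:3} and then Lemma~\ref{lm:2} to obtain $x\to J(b_1,b_2(b_1,b_2))=:b_1'$. You are also right that one needs $x\dashrightarrow a_j$ (and $x\dashrightarrow b_j$) to keep the machinery running; the paper uses this implicitly.

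The gap is in the gluing, and your proposed mechanism will not work as stated. The tail is \emph{not} transported by a single special endomorphism: there is no $\sigma$ that simultaneously sends the old tail to a fence ending at $J^{2m+1}$ and lands its left end at something $\to$-below $b_1'$. (Your detour through $\sigma_m$ produces a fence of the \emph{same} length ending at $J^{2m}$, which does not help; and the natural candidate $x\mapsto x$, $z\mapsto J^{m+1}$ gets the right endpoint but gives no link to $b_1'$.) What the paper does instead is apply the non-homomorphic transformation $c\mapsto J(x,c^2)$ vertex by vertex: it sets $a_\ell'=J(x,a_{\ell+1}^2)$ and $b_\ell'=J(x,b_{\ell+1}^2)$ for $\ell\ge 2$ and verifies the arrows directly, using that $c\to d$ implies $c^2\to d^2$ (this is where $x\dashrightarrow c$ enters) together with $\to$-monotonicity of the fixed term $J(x,{-})$. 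The squaring is exactly what turns the old endpoint $J^{m}$ into $J(x,(J^{m})^2)=J^{2m+1}$, and since $c\mapsto c^2$ is not an endomorphism of $\m F_2$, Lemma~\ref{lm:1} alone cannot do this in one shot. The splice to the collapsed first step is the explicit chain
\[
  b_1'=J\bigl(b_1,b_2(b_1,b_2)\bigr)\ \leftarrow\ J\bigl(x,b_2(x,b_2)\bigr)=J(x,b_2^2)\ \leftarrow\ J(x,a_2^2)=a_1',
\]
obtained from $x\to b_1$ and $a_2\to b_2$; this is the missing idea in your plan.
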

\begin{proof} 
We proceed by induction on $i$. For $i=0$, we get a $k$-fence from $x$ to
$J$ by putting $b_{\ell}=J_{2\ell-1}(x,z,z)$ and $a_{\ell} =
J_{2\ell}(x,x,z)$, for $1\leq \ell\leq k$.

Suppose now that $1\leq i < k$ and we have a $(k-i+1)$-fence 
\begin{equation} \label{eqn:ind-fence}
  x
  \rightarrow b_1
  \leftarrow a_1
  \rightarrow b_2
  \leftarrow a_2
  \rightarrow \dots
  \leftarrow a_{k-i}
  \rightarrow b_{k-i+1}
  \leftarrow a_{k-i+1}
  \rightarrow J^{2^{i}-1}.
\end{equation}
We proceed to construct a $(k-i)$-fence from $x$ to $J^{2^{i+1}-1}$.

Applying first Lemma~\ref{lm:3} and then Lemma~\ref{lm:2} to the 1-fence
with vertices $x,b_1,a_1,b_2$ above, we get 
\[
  x
  \rightarrow J_{2k+1}(b_1,b_2(b_1,b_2),b_2(b_1,b_2))
  = J(b_1,b_2(b_1,b_2)).
\]
Denote the term on the right hand side of the above arrow by $b_1'$. Using
$b_1\leftarrow x$, we get
\[
  b_1'
  = J(b_1,b_2(b_1,b_2))
  \leftarrow J(x,b_2(x,b_2))
  = J(x,b_2^2).
\]
Since $b_2^2\leftarrow a_2^2$, we obtain $b_1'\leftarrow J(x,a_2^2)$.
Consider the sequence $b_1', a_1' = J(x,a_2^2)$, and 
\[
  a_\ell'=J(x,a_{\ell+1}^2)
  \qquad \text{and} \qquad 
  b_\ell'=J(x,b_{\ell+1}^2)
\]
for $2\leq \ell \leq k-i$. It is easy to verify that
\[
  x
  \rightarrow b_1'
  \leftarrow a_1'
  \rightarrow b_2'
  \leftarrow a_3' 
  \rightarrow \dots
  \leftarrow a_{k-i}'.
\]

Let us look at the element $a_{k-i}'$ in this fence. We have
\[
  a_{k-i}'
  = J(x,a_{k-i+1}^2)
  \rightarrow J\left(x,\left(J^{2^{i}-1}\right)^2\right)
  = J(x,J^{2^{i+1}-2})=J^{2^{i+1}-1}
\]
(we use $a_{k-i+1} \rightarrow J^{2^i-1}$ from~\eqref{eqn:ind-fence} above).
We have therefore found a $(k-i)$-fence from $x$ to $J^{2^{i+1}-1}$, as was
needed.
\end{proof}  

We are now ready to prove Theorem~\ref{thm:main4}.
\begin{thm*}[Theorem~\ref{thm:main4}]  
There exists $b\in \m F_2$ such that $x\rightarrow
J^{2^{k}}(b,J^{2^{k}-1})$.
\end{thm*}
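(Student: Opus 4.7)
The plan is to chain together the three preceding lemmas in the natural way. First I would invoke Lemma~\ref{lm:5} with $i = k-1$ (the largest allowed index), producing a $1$-fence
\[
  x \rightarrow b_1 \leftarrow a_1 \rightarrow J^{2^{k}-1}.
\]
This is precisely a $1$-fence of the shape required as input to Lemma~\ref{lm:3}, with $b = b_1$, $a = a_1$, and $d = J^{2^{k}-1}$.

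Next I would apply Lemma~\ref{lm:3} to this $1$-fence with the parameter $\ell = k+1$. This produces a $(k+1)$-box
\[
  B\bigl(x;\, b_1,\, J^{2^{k}-1}(b_1, J^{2^{k}-1})\bigr).
\]
Feeding this $(k+1)$-box into Lemma~\ref{lm:2} yields
\[
  x \rightarrow J_{2k+1}\bigl(b_1,\, J^{2^{k}-1}(b_1, J^{2^{k}-1}),\, J^{2^{k}-1}(b_1, J^{2^{k}-1})\bigr).
\]

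The final step is a purely syntactic unfolding: since $J(x,z) = J_{2k+1}(x,z,z)$ by definition, the right-hand side equals $J\bigl(b_1,\, J^{2^{k}-1}(b_1, J^{2^{k}-1})\bigr)$. By the recursive definition of the left power, $J^{2^{k}}(x,z) = J(x, J^{2^{k}-1}(x,z))$, and substituting $x \mapsto b_1$, $z \mapsto J^{2^{k}-1}$ shows that this is exactly $J^{2^{k}}(b_1, J^{2^{k}-1})$. Taking $b := b_1$ completes the argument.

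There is no genuine obstacle: all of the combinatorial and absorption-theoretic work has already been absorbed into Lemmas~\ref{lm:2}, \ref{lm:3}, and~\ref{lm:5}. The only thing to verify is that the exponents match, and the doubling built into Lemma~\ref{lm:5} (which turns $2^{i}-1$ into $2^{i+1}-1$ via the squaring step $a_{\ell+1}^{2}$) combined with the extra multiplicative factor produced by Lemma~\ref{lm:3} then Lemma~\ref{lm:2} (which turns $d$ into $d(b,d)$ and finally into $J_{2k+1}(b,d,d)$) produces exactly the exponent $2^{k}$ on the outer $J$.
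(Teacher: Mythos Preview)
Your proposal is correct and follows essentially the same route as the paper's proof: invoke Lemma~\ref{lm:5} at $i=k-1$ to obtain a $1$-fence ending at $J^{2^{k}-1}$, then apply Lemma~\ref{lm:3} (with $\ell=k+1$) followed by Lemma~\ref{lm:2}, and finally rewrite $J_{2k+1}(b,\,\cdot\,,\,\cdot\,)$ as $J(b,\,\cdot\,)$ and use the recursion for left powers to recognize the result as $J^{2^{k}}(b,J^{2^{k}-1})$. The paper's proof is terser but makes exactly these moves.
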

\begin{proof}
By taking $i=k-1$ in Lemma~\ref{lm:5}, we obtain a 1-fence $x\rightarrow b
\leftarrow a \rightarrow J^{2^{k}-1}$. Applying Lemmas~\ref{lm:3} and
\ref{lm:2}, and observing that
\[
  J(b,J^{2^{k}-1}(b,J^{2^{k}-1}))
  = J^{2^{k}}(b,J^{2^{k}-1}),
\]
we get $x\rightarrow J^{2^{k}}(b,J^{2^{k}-1})$.
\end{proof}  

\section{Directed J\'onsson terms}\label{section4} 
\begin{thm*}[Theorem~\ref{thm:main3}]   
Let $\vr v$ be a variety, and $\vr j$ be a chain of weak J\'onsson terms of
$\vr v$. Then there exists a chain $\vr {d}$ of weak directed J\'onsson
terms of $\vr v$ such that for all $\m A,\m B\in \vr v$ we have $\m
B\lhd^m_{\vr j} \m A$ implies $\m B \lhd^m_{\vr{d}} \m A$.
\end{thm*}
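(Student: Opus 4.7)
The plan is to extract the required chain of weak directed J\'onsson terms directly from the path $x\rightarrow J^{2^{k}}(b,J^{2^{k}-1})$ guaranteed by Theorem~\ref{thm:main4}. The first step is a reduct observation: if a variety $\vr v$ comes equipped with a chain $\vr j=J_1,\dots,J_{2k+1}$ of weak J\'onsson terms, then every algebra $\m A\in \vr v$ restricted to the operations $J_1^{\m A},\dots,J_{2k+1}^{\m A}$ becomes an algebra of $\vr w$, and a middle absorption $\m B\lhd^m_{\vr j}\m A$ in $\vr v$ is literally the same statement about these $\vr w$-reducts. So it suffices to carry out the construction inside $\vr w$ and then reinterpret the resulting terms in $\vr v$.

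Next, invoke Theorem~\ref{thm:main4} to obtain $b\in \m F_2$ with $x\rightarrow J^{2^{k}}(b,J^{2^{k}-1})$. Since $\rightarrow$ is the transitive closure of $E=\{\,(t(xxz),t(xzz)) : t\in \vr g\,\}$, this arrow is witnessed by a sequence of terms $D_1,\dots,D_m\in \vr g$ satisfying
\begin{align*}
  & D_1(x,x,z)=x, \qquad D_m(x,z,z)=J^{2^{k}}(b,J^{2^{k}-1}), \\
  & D_i(x,z,z)=D_{i+1}(x,x,z) \qquad \text{for } 1\leq i<m,
\end{align*}
as elements of $\m F_2$. Because $x,z$ freely generate $\m F_2$ in $\vr w$, these equalities are identities of $\vr w$ and hence of $\vr v$.

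Finally, I would interpret the $D_i$ in $\vr v$. The weak J\'onsson identity $J_{2k+1}(x,y,y)=y$ holds in $\vr v$, so $J(x,z)=z$ there, and therefore $J^{2^{k}}(b,J^{2^{k}-1})=z$ as well; relabelling $z$ as $y$, the three displayed identities become exactly the equations $\DJchain(m)$ except possibly $D_i(x,y,x)=x$. That is, $D_1,\dots,D_m$ is a chain of weak directed J\'onsson terms of $\vr v$. The defining property of $\vr g$ then delivers the absorption conclusion: each $D_i\in \vr g$ guarantees that any $\vr w$-level absorption $\m B\lhd^m_{\vr j}\m A$ yields $\m B\lhd^m_{D_i}\m A$, and by the reduct observation this transports back to $\vr v$. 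The only real obstacle is bookkeeping at the $\vr w/\vr v$ interface and verifying that a single $\rightarrow$-step in $\m F_2$ is literally an application of a $\vr g$-term; both are routine unwindings of the definitions, so the substance of the work has already been done in Theorem~\ref{thm:main4}.
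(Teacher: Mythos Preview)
Your proposal is correct and follows essentially the same approach as the paper's proof: pass to the $\vr w$-reduct, invoke Theorem~\ref{thm:main4} to get an $E$-path $x\rightarrow J^{2^{k}}(b,J^{2^{k}-1})$, extract the witnessing terms $D_1,\dots,D_m\in\vr g$, transfer the identities to $\vr v$ where $J_{2k+1}(x,z,z)=z$ collapses the endpoint to $z$, and use membership in $\vr g$ to carry the middle absorption across. The only cosmetic difference is that the paper phrases the $\vr w/\vr v$ interface as ``$\vr w$ interprets into $\vr v$ after an inessential expansion,'' whereas you phrase it via reducts; these are the same observation.
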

\begin{proof}
Let $J_1,\dots,J_{2k+1}$ be a chain of weak J\'onsson terms in $\vr v$.  By
taking an inessential expansion of $\vr v$, we can assume that $J_i$ are
basic operations of $\vr v$. Consider the variety $\vr w$ from the previous
chapter. Since the equational basis of $\vr w$ is a subset of the identities
true in $\vr v$, the variety $\vr w$ interprets into $\vr v$.

Theorem~\ref{thm:main4} gives us that there is a chain $\vr d=\{D_1,\dots,
D_m\}\subseteq \vr g$ such that the system of equalities
\begin{align*}
  & D_1(xxz) = x, \\
  & D_i(xzz) = D_{i+1}(xxz) \quad \text{for each } i= 1,\dots,m-1, \\
  & D_m(xzz) = J^{2^{k}}(b,J^{2^{k}-1})
\end{align*}  
holds in $\vr w$. Since $\vr w$ interprets into $\vr v$, these equalities
must also hold in $\vr v$. Moreover, in $\vr v$ we have the equality
$J(x,z)=J_{2k+1}(x,z,z)= z$, so $J^{2^{k}}(b,J^{2^{k}-1})= z$. 

Finally, let $\m B \leq \m A$ be algebras in $\vr V$. By removing all of the
basic operations except $J_1,\dots, J_{2k+1}$, we obtain a pair of reducts
$\m B^\star\leq \m A^\star$ which both lie in $\vr W$. If $\m B \lhd^m_{\vr
j} \m A$, then trivially $\m B^\star \lhd^m_{\vr j}\m A^\star$, and
$D_1,\dots,D_m\in \vr g$ gives us $\m B^\star \lhd^m_{\vr{d}} \m A^\star$.
Since $\m A^\star$ is a reduct of $\m A$, we immediately have $\m B
\lhd^m_{\vr{d}} \m A$.

The chain $D_1,\dots,D_m$ middle absorbs anything that $\vr j$ absorbs, and
satisfies in $\vr v$ the system of equalities
\begin{align*}
  & D_1(xxz) = x, \\
  & D_i(xzz) = D_{i+1}(xxz) \quad \text{for each } i=1,\dots,k-1, \\
  & D_m(xzz) = z.
\end{align*}
Therefore, $D_1,\dots,D_m$ is the weak directed J\'onsson chain $\vr d$ we
were looking for.
\end{proof}  

\begin{cor}\label{cor:J}   
Let $\vr v$ be a variety with a system of J\'{o}nsson terms $\vr j$. Then
$\vr v$ has a system of directed J\'{o}nsson terms.
\end{cor}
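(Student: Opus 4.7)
The plan is to apply Theorem~\ref{thm:main3} inside an idempotent companion variety of $\vr v$ and use the middle-absorption conclusion to force the missing diagonal identities $D_i(x,y,x)=x$.

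Let $\vr w^+$ be the variety in the signature $\{J_1,\ldots,J_{2k+1}\}$ axiomatized by the full J\'onsson equations $\Jchain(k)$. By construction $\vr w^+$ is idempotent, and since the J\'onsson terms of $\vr v$ satisfy $\Jchain(k)$, the variety $\vr w^+$ interprets into $\vr v$ via $J_i\mapsto J_i$. Applying Theorem~\ref{thm:main3} with $\vr w^+$ in the role of the ambient variety yields a chain $\vr d=\{D_1,\ldots,D_m\}$ of weak directed J\'onsson terms of $\vr w^+$ such that $\m B\lhd^m_{\vr j}\m A$ implies $\m B\lhd^m_{\vr d}\m A$ for all $\m A,\m B\in\vr w^+$.

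To obtain the diagonal identities, I would consider the free algebra $\m F = \m F_2(\vr w^+)$ on two generators $u,v$. Idempotence of $\vr w^+$ makes the singleton $\{u\}$ a subalgebra of $\m F$, and the diagonal J\'onsson equation gives $J_i(u,b,u)=u$ for every $b\in F$, so $\{u\}\lhd^m_{\vr j}\m F$. The absorption guarantee above then yields $\{u\}\lhd^m_{\vr d}\m F$, meaning $D_i(u,v,u)=u$ in $\m F$; freeness upgrades this to the identity $D_i(x,y,x)=x$ in $\vr w^+$. Combined with the weak directed J\'onsson equations already satisfied by $\vr d$, the chain $\vr d$ is a chain of (genuine) directed J\'onsson terms of $\vr w^+$, and reinterpreting via the interpretation $\vr w^+\to\vr v$ transfers both the chain equations and the diagonal identities to $\vr v$.

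The only subtlety to watch is the choice to work inside $\vr w^+$ rather than directly in $\vr v$. Because $\vr v$ need not be idempotent, a singleton $\{u\}$ sitting inside a free $\vr v$-algebra need not be a subalgebra of that algebra, and the clean absorption argument $\{u\}\lhd^m_{\vr j}\m F$ that drives the diagonal identity would be unavailable. Passing to the idempotent companion $\vr w^+$ in the reduced signature sidesteps this obstruction and makes the diagonal identity fall out as an immediate consequence of Theorem~\ref{thm:main3}.
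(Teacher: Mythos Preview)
Your proposal is correct and follows essentially the same strategy as the paper: apply Theorem~\ref{thm:main3} in an idempotent setting and then use the middle-absorption conclusion with a singleton subalgebra $\{u\}$ to force the diagonal identity $D_i(x,y,x)=x$. The only cosmetic differences are that the paper works with the idempotent reduct of the free three-generated $\vr v$-algebra rather than your auxiliary variety $\vr w^+$, and uses three generators where your two suffice.
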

\begin{proof}
Let $\m F_{3}^{id}$ be idempotent reduct of the free three generated algebra
in $\vr v$. Then $\m F_{3}^{id}$ contains a chain of J\'onsson terms $\vr j$
such that $\{x\}\lhd^m_{\vr j} F_{3}^{id}$. Applying Theorem~\ref{thm:main3}
with $\m B = \{x\}$ and $\m A = \m F_{3}^{id}$ gives us that there is a
chain of directed weak J\'onsson terms $\vr d$ such that $\{x\}\lhd^m_{\vr
D} F_3^{id}$. Every $D_i$ in $\vr d$ satisfies $D_i(x,y,x)= x$, making
$\vr d$ a chain of directed J\'onsson terms for $\vr v$.
\end{proof}  

We are now ready to give a full proof of Theorem~\ref{thm:preorders}.
\begin{thm*}[Theorem~\ref{thm:preorders}]   
Suppose that $E$ and $F$ are admissible preorders on $\m a$ (that is, they
are subalgebras of $\m a^2$ that are reflexive and transitive). If
$E\lhd_{J} F$ then $E=F$.
\end{thm*}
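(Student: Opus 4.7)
The plan is to bootstrap from the statement we have already proved. By Theorem~\ref{thm:main3}, if $\vr j$ is the chain of weak J\'onsson terms witnessing $E \lhd_J F$, then there is a chain $\vr d = D_1, \ldots, D_m$ of weak \emph{directed} J\'onsson terms of the ambient variety such that $E \lhd^m_{\vr d} F$ still holds. So the problem immediately reduces to proving the conclusion under the stronger hypothesis $E \lhd_{DJ} F$, which, as the excerpt already noted, has a straightforward proof.

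For that straightforward step, I would proceed as follows. Note first that $E \subseteq F$ by the very definition of middle absorption. Conversely, fix $(a,b) \in F$; the goal is to show $(a,b) \in E$. Since $E$ is reflexive, $(a,a), (b,b) \in E$, while $(a,b) \in F$. Apply each $D_i$ coordinatewise to the triple of pairs $(a,a), (a,b), (b,b)$. Middle absorption yields
\begin{equation*}
  \bigl(D_i(a,a,b),\, D_i(a,b,b)\bigr) \in E
  \qquad \text{for every } 1 \leq i \leq m.
\end{equation*}
Using the defining equations $D_1(x,x,y) = x$, $D_m(x,y,y) = y$, and $D_i(x,y,y) = D_{i+1}(x,x,y)$, these pairs form the chain
\begin{equation*}
  a = D_1(a,a,b) \mathrel{E} D_1(a,b,b) = D_2(a,a,b) \mathrel{E} D_2(a,b,b) = \cdots = D_m(a,a,b) \mathrel{E} D_m(a,b,b) = b.
\end{equation*}
Transitivity of $E$ then gives $(a,b) \in E$, so $F \subseteq E$, completing the proof.

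There is really no obstacle here beyond invoking Theorem~\ref{thm:main3}: the whole difficulty of the finite-case argument (needing a $\to$-maximal element $a$ in order to close the fence $q \to a \dashrightarrow p \to q$ into $q \dashrightarrow p \dashrightarrow q$) is precisely what Theorem~\ref{thm:main3} absorbs, by producing a directed chain in which the telescoping above works without any maximality or finiteness hypothesis.
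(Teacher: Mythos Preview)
Your proof is correct and is essentially the same as the paper's full proof in Section~\ref{section4}: invoke Theorem~\ref{thm:main3} to upgrade the weak J\'onsson chain witnessing $E\lhd_J F$ to a weak directed J\'onsson chain $\vr d$ with $E\lhd^m_{\vr d}F$, and then telescope through $a=D_1(a,a,b)\,E\,D_1(a,b,b)=D_2(a,a,b)\,E\,\cdots\,E\,D_m(a,b,b)=b$ using transitivity. Your explicit mention of $E\subseteq F$ and the closing remark about what Theorem~\ref{thm:main3} buys over the finite argument are accurate and welcome, but the core argument matches the paper's line for line.
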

\begin{proof}
Let $\m a\in \vr v$, where $\vr v$ has a weak J\'{o}nsson system of terms
$\vr j$ and suppose that $E,F$ are admissible preorders of $\m a$ with
$E\lhd_{\vr j}F$. Let $\vr d= \{D_1,\dots,D_m\}$ be the system of weak
directed J\'{o}nsson terms for $\m A$ supplied by Theorem~\ref{thm:main3}.
Then $E\lhd^m_{\vr d}F$ and so for every $(a,b)\in F$ we have:
\begin{multline*}
  a
    = D_1(a,a,b)
    \,E\, D_1(a,b,b)
    = D_2(a,a,b)
    \,E\, D_2(a,b,b)
    = \cdots \\
  \dots = D_m(a,a,b)
    \,E\, D_m(a,b,b)
    = b,
\end{multline*}
yielding $(a,b)\in E$.
\end{proof}  

\section{Pixley terms}\label{section6}   
We now proceed to prove the statement (\ref{nCPpart}) of
Theorem~\ref{thm:main1} (Lipparini's Proposition 5 in~\cite{lipparini}).

\begin{thm}\label{thm2}   
Let $k$ be any positive integer and let $\vr v$ be a $(k+1)$-permutable
variety with a system of J\'{o}nsson terms $\vr j$. Then $\vr v$ has a
system of Pixley terms $\vr p=\{P_1,\dots,P_k\}$ such that whenever $\m A,\m
B\in \vr v$ and $\m B\lhd^m_{\vr j} \m A$, then $\m B\lhd^m_{\vr p}\m A$.
\end{thm}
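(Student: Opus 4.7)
My plan is to combine the Hagemann--Mitschke terms coming from $(k+1)$-permutability with the absorption machinery of Sections~\ref{section3} and~\ref{section4}. Since $\vr v$ is $(k+1)$-permutable, it has H--M terms $H_1, \ldots, H_k$ satisfying $H_1(x,y,y) = x$, $H_k(x,x,y) = y$, and $H_i(x,x,y) = H_{i+1}(x,y,y)$ for $1 \le i < k$. Working in the free algebra $\m F_2$ over $\vr v$ on $\{x, z\}$ with the admissible relations $E, F$ as defined in Section~\ref{section3}, the H--M terms produce a chain of length exactly $k$ from $x$ to $z$: the elements $c_i = H_i(x,x,z) = H_{i+1}(x,z,z)$ (with $c_0 = x$ and $c_k = z$) satisfy $(c_i, c_{i-1}) \in F$ via the witness $H_i$.

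The goal is to produce a parallel chain of the same length $k$ where each pair is witnessed by a term in $\vr g$, thereby yielding Pixley terms $P_1, \ldots, P_k$. From such $P_i$'s, the Pixley equations follow directly: $P_1(x,z,z) = x$ and $P_k(x,x,z) = z$ match the H--M endpoints, $P_i(x,x,z) = P_{i+1}(x,z,z)$ is inherited from the H--M identity, and $P_i(x,y,x) = x$ holds automatically for any $P_i \in \vr g$ by applying middle absorption with $\m B = \{x\} \lhd^m_{\vr j} \m F_3^{id}$ (using that $\vr j$ is a full, not merely weak, J\'onsson chain).

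The main obstacle is producing the $\vr g$-witnesses in exactly $k$ steps. The key mechanism I would exploit is a sandwich construction: given any pair $(a, b) \in E$ witnessed by $S \in \vr g$, the term $W_i(x,y,z) = H_i(a(x,z), S(x,y,z), b(x,z))$ (where $a(x,z)$ and $b(x,z)$ denote $a, b$ viewed as ternary terms not depending on $y$) lies in $\vr g$ and witnesses $(H_i(a,a,b), H_i(a,b,b)) \in E$. Applying this step-by-step along the directed J\'onsson chain $x = e_0 E e_1 E \cdots E e_m = z$ from Theorem~\ref{thm:main3} produces a reversal chain of length $km$, not $k$. To cut the length down to $k$, the construction must be applied not step-by-step but at the level of the pair $(x,z)$ itself---which lies in $F$ but not in $E$.

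Thus the technical heart of the proof should be an adaptation of the fence-and-box arguments of Section~\ref{section3}, with the H--M chain playing the role of the ``scaffolding'' that was played previously by the J\'onsson chain. Because in $\vr v$ the term $J = J_{2n+1}(x,z,z)$ collapses to $z$ and hence the power terms $J^{2^n}$ from Theorem~\ref{thm:main4} simplify dramatically, I expect the sandwich to be applicable directly to $(x,z)$---via a careful use of Lemma~\ref{lm:1} transporting Theorem~\ref{thm:main4} through special endomorphisms of $\m F_2$ supplied by the H--M chain---yielding Pixley terms $P_1,\ldots,P_k \in \vr g$ witnessing $(z,x)\in E^k$ with the required absorption property.
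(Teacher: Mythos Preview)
Your outline assembles the right ingredients---the Hagemann--Mitschke terms, the relation $E$ from Section~\ref{section3}, and the target $(z,x)\in E^k$ with witnesses in $\vr g$---but it misses the one observation that makes the argument short. In a $(k+1)$-permutable variety, any reflexive admissible relation $E$ satisfies $E^{k+1}=E^k$; hence the transitive closure $\rightarrow$ of $E$ is exactly $E^k$. This is the classical fact the paper invokes, and it is what collapses an arbitrarily long $E$-chain to one of length $k$ in a single stroke.

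Once you have this, there is nothing to engineer. Section~\ref{section4} already gives $x\rightarrow z$. Since each $H_i$ preserves $\rightarrow$, applying $H_i$ to the columns $(z\rightarrow z,\; x\rightarrow z,\; x\rightarrow x)$ yields
\[
  z = H_1(z,x,x)\rightarrow H_1(z,z,x)=H_2(z,x,x)\rightarrow\cdots\rightarrow H_k(z,z,x)=x,
\]
so $z\rightarrow x$. Because $\rightarrow=E^k$, this \emph{is} a length-$k$ chain in $E$; reading off the witnesses $D_1,\dots,D_k\in\vr g$ and setting $P_i=D_{k-i+1}$ finishes the proof.

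Your sandwich construction $W_i=H_i(a,S,b)$ is correct (it shows $W_i\in\vr g$, which is essentially the observation that the $H_i$ send $\rightarrow$-triples to $\rightarrow$-pairs), and you are right that chaining it along the length-$m$ directed J\'onsson path produces $km$ steps. But the proposed remedy---re-running the fence-and-box machinery of Section~\ref{section3} with the H--M chain as scaffolding and invoking special endomorphisms---is both vague and unnecessary. The reduction from length $km$ (or any length) to $k$ is not a matter of clever term-building; it is a one-line consequence of $E^{k+1}=E^k$, which you should prove directly from the H--M identities (given $a_0\,E\,a_1\,E\cdots E\,a_{k+1}$, let $c_i=H_{i+1}(a_i,a_{i+1},a_{i+1})$ and check $a_0\,E\,c_1\,E\cdots E\,c_{k-1}\,E\,a_{k+1}$).
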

\begin{proof}
The proof is a variant of the proof of Theorem~\ref{thm:main4}. Choose and
fix an arbitrary idempotent variety $\vr v$ that has a system $\vr j$ of
J\'{o}nsson terms and a system $H_1,\dots,H_k$ of Hagemann-Mitschke terms,
i.e.\ terms that satisfy the equations 
\begin{align*}
  & H_1(x,z,z)=x, \qquad H_k(x,x,z)=z, \\
  & H_i(x,x,z)=H_{i+1}(x,z,z) \qquad \text{for } 1\leq i<k.
\end{align*}
Starting as in Section~\ref{section3}, we let $\m F_2$ be the free algebra
of rank two in $\vr v$ freely generated by $x$ and $z$. Let $F$ be the
subalgebra of $\m f_2^2$ generated by the pairs $(x,x)$, $(x,z)$, and
$(z,z)$, that is
\[
  F = \big\{ ( t(xxz),t(xzz) ) \colon t \text{ a term of } \vr v \big\}.
\]
As before, we define $\vr g$ to be the set of all $\vr v$-terms $t(x,y,z)$
such that whenever $\m A,\m B\in \vr v$ and $\m B\lhd^m_{\vr j} \m A$, then
also $\m B\lhd^m_{t} \m A$, and let
\[
  E = \big\{ ( t(xxz),t(xzz) ) \colon t(xyz) \in \vr g \big\}.
\]
As before, $E$ and $F$ are idempotent admissible relations over $\m f_2$ and
we have $E\lhd^m_{\vr j} F$.

Using $p\rightarrow q$ to denote that $(p,q)$ belongs to the transitive
closure of $E$, we proved in Sections~\ref{section3} and \ref{section4} that
$x\rightarrow z$. Since the operations $H_i$ respect $E$ and $\rightarrow$,
we have that $z\rightarrow x$. This is a classical observation, but the
proof is easy and so we give it in the following paragraph.

Since $x\rightarrow x$, $z\rightarrow z$, and $x\rightarrow z$, we have
\begin{multline*}
  z 
    = H_1(z,x,x)
    \rightarrow H_1(z,z,x)
    = H_2(z,x,x) \\
  \rightarrow H_2(z,z,x)
    = H_3(z,x,x)
    \rightarrow \dots
    \rightarrow H_k(z,z,x)
    = x.
\end{multline*}
Transitivity of $\rightarrow$ gives $z\rightarrow x$.

We now demonstrate the classical fact that $E^{k+1}=E^k$, which gives us
that $E^k$ is the transitive closure of $E$ (and in particular $(z,x)\in
E^k$). Since $E$ is reflexive, we have $E^k\usub E^{k+1}$. Suppose that we
have $(a,b)\in E^{k+1}$. Then there are $a_i$ for $i\leq k+1$ such that 
\[
  a
  = a_0
  \Erel a_1
  \Erel \cdots 
  \Erel a_i
  \Erel a_{i+1}
  \Erel \cdots 
  \Erel a_{k+1}
  = b.
\]
Letting $c_i=H_{i+1}(a_i,a_{i+1},a_{i+1})$ for $0\leq i<k$, it is easy to
verify that
\[
  a 
  \Erel c_1 
  \Erel c_2 
  \Erel \cdots
  \Erel c_{k-1} 
  \Erel b,
\]
so $(a,b)\in E^k$.

Continuing with the main proof, we have $(z,x)\in E^k$. This means that
there are $\vr v$-terms 
\[
  D_1(x,y,z),\dots, D_k(x,y,z)\in \vr g
\]
satisfying $z=D_1(x,x,z)$, $D_i(x,z,z)=D_{i+1}(x,x,z)$ for $1\leq i<k$, and
$D_k(x,z,z)=x$. As before, whenever $\m A\lhd^m_\vr j \m B$, we also have
$\m A\lhd^m_{D_1,\dots,D_k} \m B$, so in particular $D_i(x,z,x)=x$. The
terms $P_i=D_{k-i+1}$ for $1\leq i\leq k$ then satisfy $\Pchain(k)$ in $\vr
v$.
\end{proof}  

\section{Directed Gumm terms}\label{section5}  
To conclude the proof of Theorem~\ref{thm:main1}, we focus on Gumm terms and
introduce Gumm absorption. Gumm terms $\Gchain(n)$, directed Gumm terms
$\DGchain(n)$, and weak versions of both were defined in the introduction.
(Recall that weak versions drop the conditions $J_i(x,y,x)=x$.) 

When a variety $\vr v$ has a chain of weak Gumm terms (respectively, weak
directed Gumm terms) $J_1,\dots, J_n, P$, and $\m a,\m b\in \vr v$ are such
that $\m b\leq \m a$, we say that $\m B$ \emph{Gumm absorbs} $\m A$
(respectively, \emph{directed Gumm absorbs} $\m A$) with respect to these
chains if $\m B\lhd^m_{J_1,\dots,J_n} \m A$. We next state and prove a
variant of Theorem~\ref{thm:main3} for Gumm terms.

\begin{thm}\label{thm1}   
Let $\vr v$ be a variety, and $J_1,\dots,J_{2k+1},P$ be a chain of weak Gumm
terms of $\vr v$. Then there exists a chain $D_1,\dots,D_m,Q$ of weak
directed Gumm terms of $\vr v$ such that whenever $\m A,\m B\in \vr v$ and
$\m B\lhd^m_{\vr J_1,\dots,J_{2k+1}} \m A$, then $\m B\lhd^m_{\vr
D_1,\dots,D_m} \m A$.

In particular, if $J_1,\dots,J_{2k+1}, P$ is a chain of Gumm terms, then
it follows that $D_1,\dots,D_m, Q$ is a chain of directed Gumm terms.
\end{thm}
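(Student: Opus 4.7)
The plan is to follow the structure of Sections~\ref{section3}--\ref{section4} in the Gumm setting, then use the Gumm identity $P(x,x,y)=y$ to construct the extra term $Q$. First I would repeat the setup of Section~\ref{section3}, now letting $\vr w$ be the variety with basic operations $J_1,\dots,J_{2k+1},P$ and axioms the weak Gumm identities (that is, $\Gchain(k)$ with every equation $J_i(x,y,x)=x$ removed). Define $\vr j=\{J_1,\dots,J_{2k+1}\}$, $\vr g$, and the admissible relations $E,F$ on $\m F_2$ exactly as before. The proofs of Lemmas~\ref{lm:1}--\ref{lm:5} and Theorem~\ref{thm:main4} use only equations that remain in $\vr w$, so they go through verbatim and yield a chain $D_1,\dots,D_m\in \vr g$ with $D_1(xxz)=x$, $D_i(xzz)=D_{i+1}(xxz)$, and $D_m(xzz)=J^{2^k}(b,J^{2^k-1})$ in $\vr w$ (hence in $\vr v$), for some $b\in \m F_2$.

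Interpreting in $\vr v$, the identity $J(x,z)=P(x,z,z)$ unfolds the endpoint $u:=J^{2^k}(b,J^{2^k-1})$ into a nested $P$-expression, and one checks $u(x,x)=x$. The main obstacle is to produce a $\vr v$-term $Q(x,y,z)$ with $Q(x,x,z)=z$ and $Q(x,z,z)=u$. A naive candidate like $P(b(x,y),\alpha(x,y),z)$, where $\alpha=J^{2^k-1}(b,J^{2^k-1})$, does satisfy $Q(x,x,z)=z$ via $P(x,x,z)=z$, but its evaluation at $y=z$ fails to match $u=P(b,\alpha,\alpha)$ in the third slot, because a Gumm variety need not satisfy $P(x,y,y)=x$.

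My plan for $Q$ is a double recursion mirroring the nested $P$-structure of $u$. First, set $\widehat{J^0}(x,y,z)=z$ and $\widehat{J^\ell}(x,y,z)=P(x,J^{\ell-1}(x,y),\widehat{J^{\ell-1}}(x,y,z))$; induction using $P(x,x,y)=y$ and $J^{\ell-1}(x,x)=x$ yields $\widehat{J^\ell}(x,x,z)=z$ and $\widehat{J^\ell}(x,z,z)=J^\ell(x,z)$ in $\vr v$. Writing $V_0=J^{2^k-1}$ and $V_\ell=P(b,V_{\ell-1},V_{\ell-1})$ for the levels of the unfolding of $u=V_{2^k}$, put $\hat V_0=\widehat{J^{2^k-1}}$, $\hat V_\ell(x,y,z)=P(b(x,y),V_{\ell-1}(x,y),\hat V_{\ell-1}(x,y,z))$, and take $Q=\hat V_{2^k}$. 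The key is that at every outer layer one has $b(x,x)=V_{\ell-1}(x,x)=x$, so substituting $y:=x$ makes the first two arguments of the outer $P$ equal and $P(x,x,z)=z$ collapses the whole thing to $z$; substituting $y:=z$ recovers the original unfolding and so $Q(x,z,z)=V_{2^k}=u$.

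With $Q$ constructed, the chain $D_1,\dots,D_m,Q$ satisfies the weak directed Gumm equations in $\vr v$. Middle-absorption by $D_1,\dots,D_m$ is inherited from $D_i\in \vr g$ as in the proof of Theorem~\ref{thm:main3}, and the ``in particular'' clause follows by applying the theorem to the absorption $\{x\}\lhd_{\vr j}\m F_3^{id}$ as in Corollary~\ref{cor:J}, which promotes $D_1,\dots,D_m,Q$ to a system of full directed Gumm terms.
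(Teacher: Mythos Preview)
Your proposal is correct and follows essentially the same approach as the paper's proof. The only cosmetic differences are that you include $P$ among the basic operations of the auxiliary variety $\vr w$ (the paper reuses the $\vr w$ of Section~\ref{section3} unchanged and then interprets into $\vr v$), and that your recursion for $Q$ names the second argument $J^{\ell-1}(x,y)$ (resp.\ $V_{\ell-1}(x,y)$) explicitly, whereas the paper writes it as $Q_{i-1}(x,y,y)$; since $Q_{i-1}(x,y,y)=J^{i-1}(x,y)$ in $\vr v$, the resulting term $Q$ and the verifications of $Q(x,x,z)=z$ and $Q(x,z,z)=D_m(x,z,z)$ coincide.
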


Note that Theorem~\ref{thm1} immediately gives us the third assertion of
Theorem~\ref{thm:main1}.

\begin{proof}
The argument follows the same pattern as our proof of
Theorem~\ref{thm:main3}. We consider the variety $\vr w$ and use
Theorem~\ref{thm:main4} to obtain terms $D_1,\dots,D_m$ in $\vr v$ such that
\begin{align*}
  & D_1(xxz) = x, \\
  & D_i(xzz) = D_{i+1}(xzz) \quad \text{for each } i=1,\dots,m-1, \\
  & D_m(xzz) = J^{2^{k}}(b,J^{2^{k}-1}),
\end{align*} 
where $b$ is some term composed from $J_1,\dots,J_{2k+1}$, and
$J(x,y)=J_{2k+1}(xyy)= P(xyy)$ in $\vr v$. The term
$J^{2^{k}}(b,J^{2^{k}-1})$ can be expressed as
\[
  \underbrace{ J(b,J(b,\dots,J }_{ \text{$2^k$-many $J$'s} }
    (b,\underbrace{ J(x,J(x,\dots,J }_{ \text{$(2^k-1)$-many $J$'s} }(x,z) 
    ) \dots )) \dots )),
\]
More formally, if we let $d_0(x,z)=z$ and
\begin{align*}
  d_i(x,z) & = J_{2k+1}(x,d_{i-1}(x,z),d_{i-1}(x,z)) && \text{for } 1\leq i< 2^k, \\
  d_i(x,z) & = J_{2k+1}(b(x,z),d_{i-1}(x,z),d_{i-1}(x,z)) && \text{for } 2^k\leq i<2^{k+1},
\end{align*}
then we will have $ d_{2^{k+1}-1}(x,z)=D_m(xzz)$.

Now we systematically rewrite $J^{2^{k}}(b,J^{2^{k}-1})$, replacing all but
the rightmost occurrence of $z$ by $y$, and replacing all occurrences of
$J_{2k+1}$ by $P$, to obtain a term $Q(xyz)$. 

More formally, we let $Q_0(xyz)=z$, $Q_1(x,y,z)=P(x,y,z)$, and 
\[
  Q_i(x,y,z) = P(x,Q_{i-1}(x,y,y),Q_{i-1}(x,y,z))
\]
for $2\leq i<2^k$, and
\[
  Q_i(x,y,z) = P(b(x,y),Q_{i-1}(x,y,y),Q_{i-1}(x,y,z))
\]
for $2^k\leq i<2^{k+1}$. Having done that, we let
$Q(x,y,z)=Q_{2^{k+1}-1}(x,y,z)$.

Using the equality $J_{2k+1}(xzz)= P(xzz)$, one can easily prove that
$Q(xzz)=J^{2^k}(b, J^{2^k-1})$ in $\m A$. Idempotence of $b$ together with
$P(xxz)=z$ then gives us that $Q(xxz)=z$.

Thus we have a chain of weak directed Gumm terms $D_1,\dots,D_m,Q$. Since
$D_1,\dots, D_m\in \vr g$, the middle absorption property follows as in
Theorem~\ref{thm:main3}. Showing that ordinary Gumm terms imply the
existence of a chain of directed Gumm terms is analogous to the proof of
Corollary~\ref{cor:J}.
\end{proof}  

We can now also state and prove a version of Theorem~\ref{thm:preorders} for
Gumm terms.

\begin{thm}\label{thm0}   
Suppose that $\m E$ and $\m F$ are reflexive subalgebras of $\m a^2$ and
that $\m E$ Gumm absorbs $\m F$. Whenever $(a,b)\in F$, there is $c\in A$
such that $(b,c)\in F$ and $(a,c)$ belongs to the transitive closure of $E$.
\end{thm}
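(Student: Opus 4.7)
The plan is to mirror the proof of Theorem~\ref{thm:preorders}, swapping out its last step for a Gumm-flavored one. First, apply Theorem~\ref{thm1} to the weak Gumm chain $J_1,\dots,J_{2k+1},P$ that witnesses $\m E \lhd^m_{J_1,\dots,J_{2k+1}} \m F$, obtaining a weak directed Gumm chain $D_1,\dots,D_m,Q$ of $\vr v$ such that $\m E \lhd^m_{D_1,\dots,D_m} \m F$. Recall that this directed chain satisfies, in $\vr v$, the identities $D_1(x,x,y)=x$, $D_i(x,y,y)=D_{i+1}(x,x,y)$ for $1\leq i<m$, $D_m(x,y,y)=Q(x,y,y)$, and $Q(x,x,y)=y$.

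Given $(a,b)\in F$, define $c:=Q(a,b,b)$, which equals $D_m(a,b,b)$ by the bridging identity $D_m(x,y,y)=Q(x,y,y)$. Exactly as in the proof of Theorem~\ref{thm:preorders}, the telescoping chain
\[
  a = D_1(a,a,b) \,E\, D_1(a,b,b) = D_2(a,a,b) \,E\, \cdots \,E\, D_m(a,b,b) = c
\]
places $(a,c)$ in the transitive closure of $E$. Each $E$-step is a direct application of $\m E \lhd^m_{D_i} \m F$ to the three columns $(a,a),(b,b)\in E$ (supplied by reflexivity of $E$) and $(a,b)\in F$, while the internal equalities between successive evaluations come from $D_i(x,y,y)=D_{i+1}(x,x,y)$.

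To conclude, apply $Q$ coordinatewise to the three $F$-pairs $(a,a),(a,b),(b,b)$, all of which lie in $F$ by reflexivity of $F$ together with the hypothesis $(a,b)\in F$. Since $F$ is a subalgebra of $\m a^2$, the resulting pair
\[
  (Q(a,a,b),\,Q(a,b,b)) = (b,\,c)
\]
lies in $F$: the left coordinate collapses to $b$ via the identity $Q(x,x,y)=y$, and the right coordinate equals $c$ by definition.

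The only moving part requiring care is the identification $D_m(a,b,b)=Q(a,b,b)$, which is precisely the bridging identity built into the weak directed Gumm chain produced by Theorem~\ref{thm1}; it descends ultimately from the original Gumm equation $J_{2k+1}(x,y,y)=P(x,y,y)$. No serious obstacle arises: the rest is a direct rerun of the directed $E$-chain computation that concluded the proof of Theorem~\ref{thm:preorders}, with the final term $Q$ now playing the role previously played by $D_n$.
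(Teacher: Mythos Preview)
Your proof is correct and follows essentially the same approach as the paper's: both apply Theorem~\ref{thm1} to obtain weak directed Gumm terms $D_1,\dots,D_m,Q$, use the telescoping $E$-chain $a = D_1(a,a,b)\,E\,D_1(a,b,b)=\cdots=D_m(a,b,b)=Q(a,b,b)=c$, and then observe $(b,c)=(Q(a,a,b),Q(a,b,b))\in F$. Your write-up is simply more explicit about why each step works.
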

\begin{proof}
Apply Theorem~\ref{thm1} to get weak directed Gumm terms $D_1,\dots,D_m,Q$
for the variety generated by $\m a$ so that $\m E\lhd_{D_1,\dots,D_m}\m F$.
Then
\[
  a
  \,E\, D_1(a,b,b)
  \,E\, D_2(a,b,b)
  \,E\, \cdots 
  \,E\, D_m(a,b,b)
  = Q(a,b,b)
  = c,
\]
where $b = Q(a,a,b)\,F\, Q(a,b,b) = c$.
\end{proof}  
 
\section{Final Remarks}   
We have worked through the various parts of the proof of
Theorem~\ref{thm:main4}, calculating the precise lengths of the $E$-chains
produced. The final formula for the length of the $E$-chain connecting $x$
to $J^{2^{k}}(b,J^{2^{k}-1})$ simplifies to
\[
  \frac{(2k+1)(k+1)((k+1)^{k-2}-1)}{k}.
\]
Thus, to be precise, we have proved that $J(k)$ implies $DJ(m)$ with $m$
equal to the displayed number. This is our best value for $m$. It would be
interesting to know if a different approach, or the introduction of some new
tricks, can lower this value of $m$ substantially. We close the paper by
posing a problem stemming from our work here.

\begin{prob}
Does there exist a sequence of algebras $\m a_1,\m a_2,\dots$ such that each
$\m a_n$ is $\Jchain (n)$, but the least $m$ such that $\m a_n$ is
$\DJchain(m)$ grows at least exponentially in $n$? 
\end{prob}

\bibliographystyle{plain}   
\end{document}